\def\BibTeX{{\rm B\kern-.05em{\sc i\kern-.025em b}\kern-.08em
    T\kern-.1667em\lower.7ex\hbox{E}\kern-.125emX}}
\let\color@begingroup\relax
   \let\color@endgroup\relax}{}%
\def\fix@ieeecolor@hbox#1{%
  \hbox{\color@begingroup#1\color@endgroup}}
\patchcmd\@makecaption{\hbox}{\fix@ieeecolor@hbox}{}{\FAILED}
\patchcmd\@makecaption{\hbox}{\fix@ieeecolor@hbox}{}{\FAILED}
\newcommand{\bit}{\begin{itemize}}
\newcommand{\eit}{\end{itemize}}
\newtheorem{theorem}{Theorem}
\newtheorem{proposition}{Proposition}
\newtheorem{definition}{Definition}
\newtheorem{remark}{Remark}
\newcommand{\Real}{\mathbb{R}}
\newcommand{\bSigma}{\mathbf{\Sigma}}
\newcommand{\bXi}{\mathbf{\Xi}}
\newcommand{\bPsi}{\mathbf{\Psi}}
\DeclareMathOperator{\diag}{diag}
\newcommand{\bzero}{\mathbf{0}}
\renewcommand{\a}{\mathbf{a}}
\renewcommand{\b}{\mathbf{b}}
\renewcommand{\c}{\mathbf{c}}
\newcommand{\f}{\mathbf{f}}
\newcommand{\g}{\mathbf{g}}
\newcommand{\h}{\mathbf{h}}
\renewcommand{\u}{\mathbf{u}}
\renewcommand{\v}{\mathbf{v}}
\newcommand{\w}{\mathbf{w}}
\newcommand{\x}{\mathbf{x}}
\newcommand{\y}{\mathbf{y}}
\newcommand{\z}{\mathbf{z}}
\newcommand{\A}{\mathbf{A}}
\newcommand{\B}{\mathbf{B}}
\newcommand{\C}{\mathbf{C}}
\newcommand{\I}{\mathbf{I}}
\newcommand{\J}{\mathbf{J}}
\renewcommand{\L}{\mathbf{L}}
\newcommand{\M}{\mathbf{M}}
\newcommand{\N}{\mathbf{N}}
\newcommand{\R}{\mathbf{R}}
\newcommand{\T}{\mathbf{T}}
\newcommand{\V}{\mathbf{V}}
\newcommand{\W}{\mathbf{W}}
\newcommand{\X}{\mathbf{X}}
\newcommand{\Y}{\mathbf{Y}}
\newcommand{\cE}{\mathcal{E}}
\newcommand{\cH}{\mathcal{H}}
\newcommand{\cT}{\mathcal{T}}
\newcommand{\cW}{\mathcal{W}}
\newcommand*\circled[1]{\tikz[baseline=(char.base)]{
\node[shape=circle, draw, inner sep=0.6pt] (char) {#1};}
}
\newcommand\kronF[2]{#1^{\circled{\tiny{#2}}}}
\begin{document}
\title{Nonlinear Balanced Truncation:\\ 
Part 2---Model Reduction on Manifolds}
\author{Boris Kramer, Serkan Gugercin, and Jeff Borggaard
\thanks{This work was supported in part by the NSF under Grant CMMI-2130727 and is based upon work supported by the National Science Foundation under Grant No. DMS-1929284 while the authors were in residence at the Institute for Computational and Experimental Research in Mathematics in Providence, RI, during the Spring 2020 Semester Program "Model and dimension reduction in uncertain and dynamic systems" and Spring 2020 Reunion Event.}
\thanks{B. Kramer is with the Department of Mechanical and Aerospace Engineering, University of California San Diego, La Jolla, CA 92093-0411 USA (e-mail: bmkramer@ucsd.edu).}
\thanks{S. Gugercin and J. Borggaard are with the Department of Mathematics, Virginia Tech, Blacksburg, VA 24061 (e-mail: gugercin@vt.edu, jborggaard@vt.edu).}
}

\maketitle

\begin{abstract}
Nonlinear balanced truncation is a model order reduction technique that reduces the dimension of nonlinear systems in a manner that accounts for either open- or closed-loop observability and controllability aspects of the system. Two computational challenges have so far prevented its deployment on large-scale systems: (a) the energy functions required for characterization of controllability and observability are solutions of high-dimensional Hamilton-Jacobi-(Bellman) equations, which have been computationally intractable and (b) the transformations to construct the reduced-order models (ROMs) are potentially ill-conditioned and the resulting ROMs are difficult to simulate on the nonlinear balanced manifolds.
Part~1 of this two-part article (\!\!~\cite{KGB_NonlinearBT_Part1}) addressed challenge (a) via a scalable tensor-based method to solve for polynomial approximations of the open- and closed-loop energy functions.
This article, (Part~2), addresses challenge (b) by presenting a novel and scalable method to reduce the dimensionality of the full-order model via model reduction on polynomially-nonlinear balanced manifolds. The associated nonlinear state transformation simultaneously ``diagonalizes'' relevant energy functions in the new coordinates. Since this nonlinear balancing transformation can be ill-conditioned and expensive to evaluate, inspired by the linear case we develop a computationally efficient \textit{balance-and-reduce} strategy, resulting in a scalable and better conditioned truncated transformation to produce balanced 
ROMs. 
The algorithm is demonstrated on a semi-discretized partial differential equation, namely Burgers equation, which illustrates that higher-degree transformations can improve the accuracy of ROM outputs.
\end{abstract}

\begin{IEEEkeywords}
Reduced-order modeling, balanced truncation, nonlinear manifolds, Hamilton-Jacobi-Bellman equation, nonlinear control-affine systems.
\end{IEEEkeywords}

\section{Introduction} \label{sec:intro}
Simulation of large-scale nonlinear dynamical systems can be time-consuming and resource-intensive.  It is a frequent bottleneck when these simulations are used for real-time, model-based control. Reduced-order models (ROMs) provide an attractive solution to this problem by approximating dynamical systems (and their relevant system-theoretic properties) in a much lower dimensional state space; see, e.g., \cite{AntBG20, antoulas05, BCOW2017morBook, Zho96, SvdVR08,BGW15surveyMOR} for a general overview. These ROMs then allow for the design of low-dimensional controllers and filters. 

Balanced truncation model reduction, as pioneered by Moore~\cite{moore81principal} and
Mullis and Roberts~\cite{mullis1976synthesis}
for \textit{linear} time-invariant (LTI) systems, provides an elegant approach to the model reduction problem for open-loop settings. The approach uses the controllability and observability energies of a system to determine those states that have the most relative importance. If a state requires a large amount of input energy to be reached and also has minimal effect on the output, then the reduced model would neglect that state without a significant impact on the input-output behavior of the system. Extensions of this concept to closed-loop LTI systems led to the LQG balancing \cite{verriest1981suboptimal,jonckheere1983LQGbalancing} and $\mathcal{H}_\infty$ balancing concepts~\cite{glover91Hinftybalancing}. 
Several variants for different formulations of the LTI system and different reduced-model outcomes exist, such as stochastic balancing~\cite{desai1984stochasticBal,green1988balancedstochastic}, bounded real balancing~\cite{opdenacker1988contraction}, positive real balancing~\cite{desai1984stochasticBal,ober1991balanced}, and frequency-weighted balancing~\cite{enns1984model}; see also the surveys~\cite{gugercin2004survey,benner2017chapter}. The interest in balancing methods for LTI systems in the 1980s stimulated research in computational methods for solving large-scale Lyapunov or Riccati-type algebraic matrix equations that yielded low-rank solvers \cite{benner2013numerical,simoncini2016computational} and {doubling methods~\cite{li2013solving}} that can solve these matrix equations for millions of states.

For \textit{nonlinear} large-scale systems, the theory is largely developed, yet computationally scalable approaches and efficient ROM development remain open problems. 
The theoretical foundation for balanced truncation of nonlinear open-loop systems proposed by Scherpen~\cite{scherpen1993balancing} defines input and output energy functions and shows that they can be computed as solutions to Hamilton-Jacobi ({HJ}) partial differential equations (PDEs). Theoretical extensions of~\cite{scherpen1993balancing} to the closed-loop setting, such as Hamilton-Jacobi-Bellman (HJB) balancing~\cite{scherpen1994normCoprime}  and $\mathcal{H}_\infty$ balancing~\cite{scherpen1996hinfty_balancing} have also been proposed for nonlinear systems, which can be unstable.  
Aside from the need for computing solutions of HJ(B) equations, nonlinear balancing requires a {\em nonlinear transformation} whereas balancing for LTI systems requires a linear change of variables. While reduced models obtained through the original nonlinear balancing framework~\cite{scherpen1993balancing} were quickly shown to be non-unique~\cite{gray2001nonuniqueness}, a slightly altered transformation suggested in~\cite{fujimoto2010balanced} resolved this issue, resulting in uniquely determined reduced-order models. Krener~\cite{krener2008reduced} suggested a  different balancing strategy that applies the balancing to each degree of the polynomial approximations of the energy functions separately.
The full nonlinear balance-then-reduce approaches first compute the balancing transformation in the high-dimensional space, and subsequently reduce the model dimension, see~\cite{scherpen1993balancing,scherpen1994normCoprime,scherpen1996hinfty_balancing,fujimoto2008TaylorSeriesBT,sahyoun2013reduced}. However, in the linear case, balance-then-reduce approaches have been shown to be ill-conditioned due the need to invert small Hankel singular values~\cite{antoulas05}.
In sum, the nonlinear transformation is difficult to compute---reducing the efficiency of the nonlinear ROM---and can be ill-conditioned as well.
Lastly, the development of an error bound similar to the linear case (in terms of neglected Hankel singular values) remains an open problem.

Other methods for reducing the dimensionality of the FOM via balancing have been proposed that avoid the computation of the fully nonlinear energy function. Generally, these methods compute a quadratic energy function that then leads to linear subspace reduction. For instance, \cite{verriest2006algebraicGramiansNLBT,bennerGoyal2017BT_quadBilinear} derive algebraic Gramians for the nonlinear system and propose a linear coordinate transform. The authors in \cite{condon2005nonlinear} combined Carleman bilinearization with a balancing method to approximately balance weakly nonlinear systems. Empirical Gramians for nonlinear systems can also be used~\cite{lall2002subspace} and also lead to linear subspace reduction. Lastly, a method for approximating the nonlinear balanced truncation reduction map via reproducing kernel Hilbert spaces (a machine learning-based, data-driven technique) has been proposed in~\cite{bouvrie2017kernel}, and applied to two- and seven-dimensional ODE examples. The idea is to embed the nonlinear dynamical system into a high (or infinite) dimensional Reproducing Kernel Hilbert Space (RKHS). There, linear theory can be applied. Then, the authors learn mappings from the high dimensional RKHS to the balanced finite dimensional system and demonstrate their results on 2d and 7d ODE examples. 
There  also exist related methods that balance different characteristics of the problem (not the aforementioned nonlinear energy functions), such as flow balancing~\cite{verriest2000flow}, incremental balancing~\cite{besselinkScherpen2014incrementalBT}, dynamic balancing~\cite{sassano2014dynamic}. These methods are only loosely connected to the nonlinear Hankel operator and the energy functions~\cite{kawano2016model}. Moreover, incremental balancing is restricted to only odd functions on the right-hand-side of the dynamical systems. Another type of balancing, called differential balancing~\cite{kawano2016model}, has been developed with a connection to the nonlinear Hankel operator. This method requires solving linear time-dependent matrix inequalities (extensions of Lyapunov inequalities) and is therefore focused on open-loop systems.

In this two-part article, we propose a scalable and computationally efficient nonlinear balanced truncation approach via nonlinear energy functions that works for both closed- and open-loop systems. Such a method is currently lacking for medium- and large-scale systems. 
In Part~1 of this article, \cite{KGB_NonlinearBT_Part1}, we (i) propose a unifying framework to the open- \textit{and} closed-loop nonlinear balancing problem by considering Taylor-series-based approaches to solve a class of parametrized HJB equations; (ii) derive the explicit tensor structure for the coefficients of the polynomial expansion which allows our numerical methods to scale up to thousands of state variables; (iii) provide large-scale numerical examples, a scalability analysis, and open-access software for all algorithms.

There are three main contributions of this article (Part~2). 
First, we present a scalable Taylor-series-based strategy that is implemented in open-access software (the \texttt{NLbalancing} repository~\cite{NLbalancing}) for computing the nonlinear state transformations and singular value functions. We then derive the nonlinearly balanced models. The nonlinear state transformation simultaneously ``diagonalizes'' the open- and closed-loop energy functions in the new coordinates. However, this nonlinear balancing transformation can be ill-conditioned (which also occurs in the LTI case) and is expensive to evaluate. 
Second, we thus develop a computationally efficient \textit{balance-\underline{and}-reduce} strategy that reduces the dimensional of the full-order model (FOM) via model reduction on nonlinear manifolds. The strategy addresses the ill-conditioning problem in the full nonlinear transformation and allows for efficient nonlinear ROM construction. 
Third, we present nonlinear balanced ROMs for a semi-discretized PDE of Burgers-type, where we investigate the impact of the degree of the nonlinear transformation on the ROM input-output quality and illustrate the behavior of the singular value functions.
We highlight that the work herein does not assume a specific form of the dynamical system: the transformation merely balances two polynomial energy functions and produces a nonlinear mapping that can be applied to any nonlinear system.

This article is organized as follows. Section~\ref{sec:NLBal} briefly reviews background material on energy functions for nonlinear systems. Section~\ref{sec:BalModels} presents our first key contribution, a scalable strategy for computing the nonlinear basis transformations and the singular value functions. We also present the nonlinearly balanced models in that section. Section~\ref{sec:ROMs} presents our second key contribution, a modified and scalable method to obtain a balanced nonlinear ROM that addresses the ill-conditioning problem in the full nonlinear transformation and allows for more efficient nonlinear ROM evaluation.  Section~\ref{sec:numerics} presents numerical results for the nonlinear ROMs for the semi-discretized Burgers equation.
Lastly, Section~\ref{sec:conclusions} offers conclusions and an outlook toward future work.

\section{Energy functions for nonlinear balancing} \label{sec:NLBal}
This section briefly reviews the unifying concept of the $\cH_\infty$ energy functions for nonlinear control-affine dynamical systems, see Part~1 of this article (\hspace{-.01em}\cite{KGB_NonlinearBT_Part1}) for more details. 
Consider the finite-dimensional system
\begin{align}
\dot{\x}(t) & = \f(\x(t))  + \g(\x(t)) \u(t), \label{eq:FOMNL1}\\
\y(t) &= \h(\x(t)),\label{eq:FOMNL2}
\end{align}
where $t$ denotes time, $\x(t) \in \Real^{n}$ is the state, 
$\u(t) \in \Real^m$ is a time-dependent input vector, 
$\g:\Real^n\mapsto\Real^{n\times m}$  encodes the actuation mechanism,
$\y(t) \in \Real^p$ is the output vector measured by the function $\h:\Real^n \mapsto \Real^p$, the nonlinear drift term is $\f:\Real^n\mapsto\Real^n$, and $\x =\bzero$ is an isolated equilibrium point for $\u=\bzero$.
For the  system
\eqref{eq:FOMNL1}-\eqref{eq:FOMNL2}, the $\mathcal{H}_\infty$ past energy in the state $\x_0$ is defined, for $0<\gamma$, $\gamma\neq 1$, as
{\small \begin{equation} \label{eq:HinftyenergyFunctions1}
\cE_\gamma^{-}(\x_0)  :=\min_{\substack{\u \in L_{2}(-\infty, 0] \\ \x(-\infty) = \bzero, \\  \x(0) = \x_0}} \ \frac{1}{2} \int\displaylimits_{-\infty}^{0} (1-\gamma^{-2})\Vert \y(t) \Vert^2  +  \Vert \u(t) \Vert^2 {\rm{d}}t.
\end{equation}}
Furthermore, the  $\mathcal{H}_\infty$ future energy in the state $\x_0$ is defined, 
for $0<\gamma<1$ as 
\begin{equation} \label{eq:HinftyenergyFunctions3}
\cE_\gamma^{+}(\x_0)  :=\max_{\substack{\u \in L_{2}[0,\infty) \\ \x(0) = \x_0, \\  \x(\infty) = \bzero}} \ \frac{1}{2} \int\displaylimits_{0}^{\infty} \Vert \y(t) \Vert^2  +  \frac{\Vert \u(t) \Vert^2}{1-\gamma^{-2}} {\rm{d}}t,
\end{equation}
and for $\gamma>1$, as
\begin{equation} \label{eq:HinftyenergyFunctions2}
\cE_\gamma^{+}(\x_0)  :=\min_{\substack{\u \in L_{2}[0,\infty) \\ \x(0) = \x_0 , \\ \x(\infty) = \bzero}} \ \frac{1}{2} \int\displaylimits_{0}^{\infty} \Vert \y(t) \Vert^2  +  
\frac{\Vert \u(t) \Vert^2}{1-\gamma^{-2}} {\rm{d}}t.
\end{equation}	
The $\mathcal{H}_\infty$ energy functions can be computed via HJB PDEs, see Part~1 of this paper, \cite[Sec.~II]{KGB_NonlinearBT_Part1}, for more details. For LTI systems, these energy functions are quadratic in the state and can be computed by solving $\mathcal{H}_\infty$ algebraic Riccati equations. 

Under the assumption that the energy functions exist and are smooth, the open-loop nonlinear energy functions $\cE_c(\x)$ and $\cE_o(\x)$ (see \cite[Sec 2.5]{KGB_NonlinearBT_Part1} can be obtained in the limit $\gamma\rightarrow 1$ of the $\mathcal{H}_\infty$ energy functions
$\cE_\gamma^{-}(\x)$ and $\cE_\gamma^{+}(\x)$, i.e., 
\begin{equation} \label{eq:limitto1}
\lim_{\gamma\rightarrow 1} \cE_\gamma^{-}(\x) = \cE_c(\x),
\quad 
\lim_{\gamma\rightarrow 1} \cE_\gamma^{+}(\x) = \cE_o(\x).
\end{equation}

In this article, we assume similar to \cite{lukes1969optimal} that the energy functions are polynomial (or are approximated as such), which allows us scalability in our approach to the model reduction process. In particular, we assume that the past energy function is represented in the form (or is approximated as)
\begin{equation} \label{eq:pastenerexp}
    \cE_\gamma^-(\x)  \approx \frac{1}{2}\left ( \v_2^\top \kronF{\x}{2}  + \v_3^\top \kronF{\x}{3} + \cdots + \v_d^\top \kronF{\x}{d} \right ).
\end{equation}
where {$\v_k \in \Real^{n^k}$ and} $\kronF{\x}{k}$ denotes the $k$-term Kronecker product of $\x$ defined as
\begin{equation}
\kronF{\x}{k}: = \underbrace{\x \otimes \dots \otimes \x}_{k \ \text{times}} {\in \Real^{n^k}}.
\end{equation}
We also assume the future energy function $\cE_\gamma^+(\x)$ has the form (or is approximated as)
\begin{align}
\cE_\gamma^+(\x)  
& \approx \frac{1}{2}\left ( \w_2^\top \kronF{\x}{2}  + \w_3^\top \kronF{\x}{3} + \dots + \w_d^\top \kronF{\x}{d} \right ), \label{eq:wi_coeffs}
\end{align}
where {$\w_k \in \Real^{n^k}$}.
To ensure a unique representation of the coefficients in \eqref{eq:pastenerexp} and \eqref{eq:wi_coeffs}, we assume that the polynomial representations of the energy functions have a symmetric representation as explained next. 
\begin{definition}[Symmetric Coefficients\label{def:sym}] A monomial term with real coefficients $\w_d^\top \kronF{\x}{d}$ has {\em symmetric coefficients} if it satisfies
\begin{displaymath}
  \w_d^\top \hspace{-0.5ex} \left(\a_1 \otimes \a_2 \otimes \cdots \otimes \a_d\right) = \w_d^\top \hspace{-0.5ex} \left(\a_{i_1} \otimes \a_{i_2} \otimes \cdots \otimes \a_{i_d}\right),
\end{displaymath}
where the indices $\{ i_k \}_{k=1}^d$ are any  permutation of $1, \ldots, d$.
\end{definition}
Note that Algorithm~1 in Part~1 of this paper is designed to ensure symmetry in the computed coefficients.
This symmetry definition generalizes the definition of symmetry from matrices to tensors.  For example, requiring $\w_2^\top (\a\otimes \b) = \w_2^\top (\b\otimes \a)$ for any $\a$ and $\b$ is equivalent to $(\a^\top\otimes\b^\top)\w_2 = (\b^\top\otimes\a^\top)\w_2$.  Hence, using $\w_2 = \text{vec}(\W_2)$, we have $\b^\top \W_2 \a = \a^\top \W_2 \b$. Since these are real scalars, this implies $\W_2 = \W_2^\top$.

\begin{remark}
    In \cite[Sec.~3]{KGB_NonlinearBT_Part1}, we present scalable methods to compute the polynomial approximations \eqref{eq:pastenerexp} and \eqref{eq:wi_coeffs} to the energy functions. However, one may also exploit other approaches, e.g., machine learning~\cite{bouvrie2017kernel} or polynomial fitting to obtain a polynomial form of the energy functions. The methods in this paper are agnostic to how the vectors $\v_i$ and $\w_i$ are computed. 
\end{remark}

\section{From energy functions to balanced models} \label{sec:BalModels}
Energy functions are at the heart of the balancing and model reduction process, both for linear and nonlinear systems. 
In this section, we suggest a tensor-based approach to compute two different balancing transformations that efficiently ``diagonalize" these energy functions by exploiting the polynomial structure of the energy functions in \eqref{eq:pastenerexp} and \eqref{eq:wi_coeffs}. 
We compute the {\em input-normal/output-diagonal} balancing transformation in Section~\ref{sec:inputnormal}, the {\em input-output} balancing transformation in Section~\ref{sec:inputoutput}, and present the nonlinearly transformed FOM (in either input-normal/output-diagonal form or in input-output balanced form) in Section~\ref{sec:balancedFOM}.
%
\subsection{Input-normal/output-diagonal balancing transformation} \label{sec:inputnormal}
The following theorem shows the existence of a nonlinear coordinate transformation that brings the nonlinear system \eqref{eq:FOMNL1}--\eqref{eq:FOMNL2} into the input-normal/output-diagonal form.
\begin{theorem} \cite[Thm. 8]{fujimoto2010balanced} \label{thm:inputnormal}
	Suppose the Jacobian linearization of the nonlinear system is controllable, observable, and asymptotically stable. Then there is a neighborhood $\cW$ of the origin and a smooth coordinate transformation $\x = \Phi(\z)$ on $\cW$ with $\z = [z_1, z_2, \ldots, z_n]$ such that the controllability and observability energy functions have {\em input-normal/output-diagonal} form:
	\begin{align}
	\cE_c(\Phi(\z)) & = \frac{1}{2} \sum_{i=1}^n z_i^2, \label{eq:balRealCont} \\
	\cE_o(\Phi(\z)) & = \frac{1}{2} \sum_{i=1}^n  \xi_i^2(z_i) z_i^2.  \label{eq:balRealObs}
	\end{align}
\end{theorem}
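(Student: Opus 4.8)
The plan is to construct $\Phi$ in two stages: first a change of coordinates that puts $\cE_c$ in the normal form $\tfrac12\Vert\z\Vert^2$, and then a second change, constrained to preserve that normal form, that additionally diagonalizes $\cE_o$. Before either stage, I would record what the hypotheses buy us: since the Jacobian linearization is controllable, observable, and asymptotically stable, the open-loop energy functions $\cE_c$ and $\cE_o$ from Part~1 exist and are smooth on a neighborhood of the origin, vanish at the origin together with their gradients, and have positive-definite Hessians there, equal respectively to the inverse of the linear controllability Gramian and to the linear observability Gramian. Hence $\cE_c$ and $\cE_o$ are both smooth, positive-definite functions with a nondegenerate critical point at $\bzero$, which is exactly the setting for a Morse-lemma-type argument.

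For the input-normalization stage, I would use Taylor's theorem with integral remainder (Hadamard's lemma) to write $\cE_c(\x) = \tfrac12\,\x^\top \M_c(\x)\,\x$ with $\M_c$ a smooth symmetric matrix field and $\M_c(\bzero) = \nabla^2\cE_c(\bzero)$ positive definite; thus $\M_c(\x)\succ 0$ near the origin and has a smooth Cholesky factor $\M_c(\x) = \L_c(\x)^\top\L_c(\x)$. The map $\x\mapsto\L_c(\x)\x$ is smooth with invertible Jacobian $\L_c(\bzero)$ at the origin, hence a local diffeomorphism; taking $\Phi_1$ to be its inverse gives $\cE_c(\Phi_1(\bar\x)) = \tfrac12\Vert\bar\x\Vert^2$. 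From here on only coordinate changes preserving $\tfrac12\Vert\cdot\Vert^2$ are admissible, i.e. those of the form $\bar\x = \R(\z)\z$ with $\R(\z)$ orthogonal-valued; in particular a constant orthogonal change makes $\nabla^2(\cE_o\circ\Phi_1)(\bzero) = \diag(\sigma_1^2,\dots,\sigma_n^2)$, and I would assume the $\sigma_i$ are distinct (the generic case; coincidences need a separate block argument).

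For the output-diagonalization stage, write $\cE_o(\Phi_1(\bar\x)) = \tfrac12\,\bar\x^\top\M_o(\bar\x)\,\bar\x$ with $\M_o$ smooth symmetric and $\M_o(\bzero) = \diag(\sigma_i^2)$. Because these eigenvalues are simple, standard perturbation theory (equivalently, the implicit function theorem applied to the eigenvalue and eigenvector equations) yields smooth eigenvalue functions $\lambda_i(\bar\x)$ and a smooth orthogonal eigenvector field $\R(\bar\x)$ with $\R(\bzero) = \I$ and $\M_o(\bar\x) = \R(\bar\x)\diag(\lambda_i(\bar\x))\R(\bar\x)^\top$. The remaining transformation $\Phi_2$ (so that $\Phi = \Phi_1\circ\Phi_2$) I would characterize geometrically: on each sphere $\{\Vert\bar\x\Vert = \text{const}\}$ the Lagrange-multiplier condition for a critical point of $\cE_o\circ\Phi_1$ reads $\M_o(\bar\x)\bar\x = \lambda\bar\x$, i.e. $\bar\x$ is an eigenvector of $\M_o(\bar\x)$; for each small radius there are exactly $n$ such directions, which limit to $\e_1,\dots,\e_n$ as the radius tends to zero, and the implicit function theorem makes the corresponding critical-point curves smooth and tangent to the $\e_i$. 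Straightening these $n$ curves onto the coordinate axes while using the field $\R$ to fill in the transversal directions produces a smooth norm-preserving $\Phi_2$ for which $\cE_o\circ\Phi$ has no cross terms, and one sets $\xi_i^2(z_i) := 2\,\cE_o(\Phi(z_i\e_i))/z_i^2$, extended smoothly by $\xi_i^2(0) = \sigma_i^2$. (In the polynomial setting of this paper one may alternatively organize both stages degree by degree.)

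The main obstacle is precisely the last step: converting the pointwise symmetric eigendecomposition of $\M_o$ into a single smooth change of coordinates and verifying that it kills \emph{all} the off-diagonal coupling of $\cE_o$, not merely the Hessian-level coupling and the alignment of the critical-point curves. This is exactly where the simplicity of the singular value functions and the differential-eigenstructure/Hankel-singular-value analysis underlying \cite{fujimoto2010balanced} are needed; if some $\sigma_i$ coincide, one must restrict to a stratum on which the differential eigenstructure is well defined or diagonalize the coincident block by hand. The smoothness of the Cholesky factor in stage one, the smoothness of the eigenvector field in stage two, and the appeals to the inverse and implicit function theorems are otherwise routine.
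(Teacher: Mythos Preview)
The paper does not prove this theorem; it is quoted as \cite[Thm.~8]{fujimoto2010balanced} and then used as the starting point for the constructive polynomial developments that follow. There is therefore no in-paper proof to compare against.

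On its own merits, your input-normalization stage is the standard Morse-lemma construction and is correct. The output-diagonalization stage, however, has a genuine gap at exactly the point you flag. Straightening the $n$ critical-point curves of $\cE_o\circ\Phi_1$ on spheres onto the coordinate axes, and filling in transversally via the eigenvector field $\R(\bar\x)$, does \emph{not} by itself force $\cE_o\circ\Phi$ into the separated form $\tfrac12\sum_i\xi_i^2(z_i)z_i^2$. It only guarantees that the axes are critical curves and that the Hessian is diagonal along each axis; away from the axes one can still have coupling (e.g., $z_1^2 z_2^2$-type contributions), since the eigenvalues $\lambda_i(\bar\x)$ of $\M_o(\bar\x)$ depend on the full state rather than on a single coordinate. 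Obtaining $\xi_i$ that depends on $z_i$ alone is precisely the contribution of \cite{fujimoto2010balanced} over the earlier normal form of \cite{scherpen1993balancing} (where the singular value functions depended on all of $\z$, causing the non-uniqueness discussed in \cite{gray2001nonuniqueness}); establishing it requires the nonlinear Hankel-operator/differential-singular-value machinery of that reference, which your sketch defers to rather than supplies. So as a self-contained proof the proposal is incomplete at the decisive step.
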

\vspace{0.1cm}
The above theorem also holds for the closed-loop balancing case (even for unstable systems under additional assumptions, see \cite[Thm. 5.13]{scherpen1996hinfty_balancing}). 
In \eqref{eq:balRealObs}, the functions $\xi_i(z_i)$ are called the singular value functions of the input-normal form. One can think of \eqref{eq:balRealCont} as an energy function with equal contribution from each state to the controllability energy.  Unlike the linear case, the singular value functions $\xi_i(z_i)$ are not constant and are state-dependent. Notably, the $i$th singular value function $\xi_i$ only depends on the $i$th state $z_i$, which allows for truncation of the individual states when constructing the ROM, as we see in Section~\ref{sec:ROMs}.

\subsubsection{Taylor expansion of singular value functions and the state transformation}
Under the conditions of Theorem \ref{thm:inputnormal}, a $C^\infty(\Real^n)$ mapping $\x = \Phi(\z)$ exists to transform the nonlinear system into the input-normal/output-diagonal form. Here, we assume that this transformation is analytic, so that we can write (or approximate) it as
\begin{align} 
\x  = \Phi(\z) & = \T_1 \z + \T_2 \kronF{\z}{2} + \dots + \T_{k} \kronF{\z}{k} \nonumber \\
& =: \T_1 \z + \Phi^h(\z) \label{eq:trafo},
\end{align}
where $\T_j \in \Real^{n\times n^j}$ are the polynomial coefficients, $\T_1$ is nonsingular, and $\kronF{\z}{k}$ is, as before, the $k$-times Kronecker product of $\z$. This allows for scalable computation of this nonlinear transformation. To abbreviate notation in what follows we group the higher-degree terms in \eqref{eq:trafo} into the term $\Phi^h$. 
We similarly expand the singular value functions of the input-normal/output-diagonal form associated with the $i$th state component as a degree $\ell$ polynomial
\begin{align} 
    \xi_i(z_i) 
    & = \xi_i(0) + c_i^{(1)} z_i + c_i^{(2)} z_i^2 + \cdots + c_i^{(\ell)} z_i^\ell \nonumber\\
    & =: \xi_i(0) + \xi_i^h(z_i),  \label{eq:expandSval}
\end{align}
for $i=1, 2, \ldots, n$. Define the coefficients of the degree $j$th terms as $\c_j= [c_1^{(j)}, \ c_2^{(j)}, \ldots, c_n^{(j)}]^\top$, which allows us to write the vector of singular value functions $\mathbf{\xi}(\z) = \left [ \begin{matrix} \xi_1(z_1) & \xi_2(z_2) & \cdots & \xi_n(z_n)  \end{matrix} \right ]^\top$ of the input-normal/output-diagonal form as
\begin{align} \label{eq:sval_expansion}
    \mathbf{\xi}(\z)  =  \bXi \cdot \mathbf{1} + \diag(\c_1)\z + \dots + \diag(\c_\ell) \z^\ell,
\end{align}
where $\z^\ell$ denotes  \emph{the componentwise power} of the vector $\z$, the diagonal matrix $\bXi= {\normalfont \text{diag}} (\xi_1^2(0), \ldots, \xi_n^2(0))$, and $\mathbf{1}$ is the vector of ones. The following two sections derive tensor expressions for the efficient computation of $\T_i$ and $\c_i$. 

\subsubsection{Computing the polynomial coefficient matrices of the input-normal/output-diagonal nonlinear transformation}
This section outlines the computations required to obtain the polynomial coefficient matrices $\{\T_i\}_{i=1}^{k}$ for the nonlinear transformation \eqref{eq:trafo}. 
To begin, we rewrite \eqref{eq:pastenerexp} and \eqref{eq:wi_coeffs} as
\begin{align} 
\cE_\gamma^-(\x) & = \frac{1}{2}\x^\top \V_2\x  + \cE_c^h(\x), \label{eq:NLen}\\
\cE_\gamma^+(\x) & = \frac{1}{2}\x^\top \W_2\x + \cE_o^h(\x),\label{eq:NLen2}
\end{align}
with $\cE_c^h(\x) = \frac{1}{2}\left ( \v_3^\top \kronF{\x}{3} + \cdots + \v_d^\top \kronF{\x}{d} \right)$ and $\cE_o^h(\x) = \frac{1}{2}\left ( \w_3^\top \kronF{\x}{3} + \cdots + \w_d^\top \kronF{\x}{d} \right)$.
Note, that from \cite[Thm 5.17]{scherpen1996hinfty_balancing} it follows that $\text{unvec}(\v_2) = \V_2 = \Y_\infty^{-1}$ and $\text{unvec}(\w_2) = \W_2 =\X_\infty$ are the unique symmetric positive-definite stabilizing solutions to the $\mathcal{H}_\infty$ filter ARE
{\small 
\begin{equation}
    \A \Y_\infty + \Y_\infty \A^\top + \B \B^\top - (1-\gamma^{-2}) \Y_\infty \C^\top \C \Y_\infty = \bzero, \label{eq:HinftyRiccati1}
\end{equation}
}    
and the $\cH_\infty$ control ARE  
{\small
\begin{equation}
    \A^\top \X_\infty + \X_\infty \A + \C^\top \C - (1-\gamma^{-2}) \X_\infty \B \B^\top \X_\infty = \bzero. \label{eq:HinftyRiccati2}
\end{equation}
}

\noindent
To obtain a balanced representation, we insert the transformation \eqref{eq:trafo} into the past energy function~\eqref{eq:NLen} and enforce the input-diagonal structure in equation~\eqref{eq:balRealCont} to obtain
\begin{align}
\z^\top \z = & \z^\top \T_1^\top \V_2 \T_1 \z + 2\z^\top \T_1^\top \V_2 \Phi^h(\z) \nonumber \\
& + \Phi^h(\z)^\top \V_2 \Phi^h(\z) + 2 \cE_c^h(\Phi(\z)). \label{eq:phiMatch1}
\end{align}
We can then proceed to compute the matrices $\{\T_i\}_{i=1}^{k}$ by matching polynomial coefficients. 
Similarly, inserting the transformation \eqref{eq:trafo} into the future energy function \eqref{eq:NLen2} and enforcing diagonalization as in  \eqref{eq:balRealObs} yields
\begin{align}
 \sum_{i=1}^n &  z_i^2 \left ( \xi_i^2(0) + 2\xi_i(0) \xi_i^h(z_i) + \xi_i^h(z_i)^2 \right ) \nonumber \\
& = \z^\top \T_1^\top \W_2 \T_1 \z + 2\z^\top \T_1^\top \W_2 \Phi^h(\z) \nonumber \\
& \quad + \Phi^h(\z)^\top \W_2 \Phi^h(\z) + 2 \cE_o^h(\Phi(\z)).
\label{eq:phiMatch2}
\end{align}
Knowing the $\{\T_i\}_{i=1}^{k}$ from the previous step, we can solve \eqref{eq:phiMatch2} for the singular value functions.

Before stating the next theorem, we introduce the notation
\begin{equation} \label{eq:tensorproducts}
\mathcal{T}_{m,l} = \sum_{\sum{i_j}=l} \T_{i_1}\otimes \cdots \otimes \T_{i_m} \in \Real^{n^m\times n^l},
\end{equation}
where $i_j \geq 1$ for each $j=1,\ldots,m$. Thus, $\mathcal{T}_{m,l}$ denotes all unique tensor products with $m$ terms and $n^{l}$ columns. For instance, $\mathcal{T}_{2,3}= \T_1\otimes \T_2 + \T_2\otimes\T_1$, or $\mathcal{T}_{3,4}= \T_1\otimes\T_1\otimes \T_2 + \T_1\otimes\T_2\otimes \T_1 +\T_2\otimes\T_1\otimes \T_1$, which contains three unique triple tensor products, and $\mathcal{T}_{4,4} = \kronF{\T_1}{4}$.
\begin{theorem}[Input-normal/output-diagonal transform] \label{thm:polyCoeffsTrafo}
   Let $\w_i,\v_i$ be the vectors of polynomial coefficients for the energy functions in \eqref{eq:pastenerexp} and \eqref{eq:wi_coeffs}. Let $\L, \R$ be Cholesky factors of 
   $\W_2 = {\rm unvec}(\w_2)$ and $\V_2={\rm unvec}(\v_2)$, i.e., $\W_2 = \L \L^\top$ and $\V_2 = \R \R^\top$.  Compute the singular value decomposition of $\L^\top \R^{-\top} = \mathcal{U} \bXi \mathcal{V}^\top$. 
    The linear transformation $\T_1$ in \eqref{eq:trafo} and its inverse $\T_1^{-1}$ are given by
      \begin{equation}
        \T_1 = \R^{-\top} \mathcal{V} \in \Real^{n\times n}, \quad {\T_1^{-1}} = \bXi^{-1} \mathcal{U}^\top \L^\top \in \Real^{n\times n}
    \end{equation}
and they satisfy $\T_1^\top \V_2 \T_1 = \I$ (input-normal) and $\T_1^\top \W_2 \T_1 = \bXi^2$ (output-diagonal). Moreover, $\T_1^{-1} \V_2^{-1} \W_2 \T_1=\bXi^2$, i.e., $\V_2^{-1}\W_2$ is similar to $\bSigma^2$. Thus, $\T_1$ is the well-known input-normal/output-diagonal transformation for the linearized system. 
The higher-degree tensors of the nonlinear transformation \eqref{eq:trafo} for $k\geq 2$ take the form
{\small
\begin{subequations}
\begin{align}
    \T_{k} &= -\frac{1}{2} \T_1 {\rm{unvec}}\left( \M_k\right)^\top \in \Real^{n\times n^k},~\mbox{where} \label{eq:Tkformula1} \\
   \M_k & =  \sum_{\substack{i,j>1\\ i+j=k+1}}  {\rm{vec}} \left ( \T_{j}^\top \V_2 \T_{i} \right ) +     \sum_{i=3}^{k+1} \cT_{i,k+1}^\top \v_i  \label{eq:Tkformula2} \end{align}
\end{subequations}
}
\end{theorem}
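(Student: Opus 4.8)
The plan is to prove this by matching polynomial coefficients of equal degree in the identity \eqref{eq:phiMatch1}, working from the lowest degree upward. First I would handle the degree-two (linear) part: restricting \eqref{eq:phiMatch1} to quadratic terms in $\z$ gives exactly $\z^\top\z = \z^\top \T_1^\top \V_2 \T_1 \z$, i.e.\ $\T_1^\top \V_2 \T_1 = \I$. Since $\V_2 = \R\R^\top$, this says $(\R^\top \T_1)^\top (\R^\top \T_1) = \I$, so $\R^\top \T_1$ is orthogonal; setting $\R^\top \T_1 = \mathcal{V}$ yields $\T_1 = \R^{-\top}\mathcal{V}$. For the output-diagonal claim, substituting $\T_1 = \R^{-\top}\mathcal{V}$ into $\T_1^\top \W_2 \T_1$ and using $\W_2 = \L\L^\top$ gives $\mathcal{V}^\top \R^{-1}\L\L^\top \R^{-\top}\mathcal{V} = (\L^\top \R^{-\top}\mathcal{V})^\top(\L^\top \R^{-\top}\mathcal{V})$; by the SVD $\L^\top\R^{-\top} = \mathcal{U}\bXi\mathcal{V}^\top$ this equals $(\mathcal{U}\bXi)^\top(\mathcal{U}\bXi) = \bXi^2$. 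The inverse formula and the similarity statement $\T_1^{-1}\V_2^{-1}\W_2\T_1 = \bXi^2$ then follow by direct algebraic manipulation from $\T_1^\top\V_2\T_1 = \I$ and $\T_1^\top\W_2\T_1 = \bXi^2$ (eliminate $\V_2$, or equivalently note $\T_1^{-1} = \T_1^\top\V_2 = \bXi^{-1}\mathcal{U}^\top\L^\top$).

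For the higher-degree tensors, I would fix $k\ge 2$ and collect all terms in \eqref{eq:phiMatch1} that are homogeneous of degree $k+1$ in $\z$; note that $\Phi^h(\z) = \sum_{j\ge 2}\T_j\kronF{\z}{j}$, so $\z^\top\T_1^\top\V_2\Phi^h(\z)$ contributes degree $k+1$ through the single term $\z^\top\T_1^\top\V_2\T_k\kronF{\z}{k}$, while the quadratic-in-$\Phi^h$ term $\Phi^h(\z)^\top\V_2\Phi^h(\z)$ and the pure nonlinear energy $2\cE_c^h(\Phi(\z))$ contribute degree $k+1$ only through products $\T_i,\T_j$ with $i+j = k+1$ and $i,j\ge 2$ (respectively through compositions $\v_i^\top(\Phi(\z))^{\otimes i}$ expanded via the tensor-product notation \eqref{eq:tensorproducts}). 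The key bookkeeping device is exactly $\cT_{m,l}$: expanding $\kronF{(\T_1\z + \Phi^h(\z))}{i}$ and extracting the degree-$(k+1)$ part of $\v_i^\top$ applied to it produces $\v_i^\top \cT_{i,k+1}\kronF{\z}{k+1}$ for $i = 3,\dots,k+1$. Since the left side of \eqref{eq:phiMatch1} has no terms of degree $\ge 3$, setting the degree-$(k+1)$ coefficient to zero gives, after using $\cT_{i,k+1}^\top \v_i$ for the vectorized form, the relation $2\,\mathrm{vec}(\T_1^\top\V_2\T_k) + \M_k = \bzero$ where $\M_k$ is as in \eqref{eq:Tkformula2}; solving and using $\T_1^\top\V_2 = \T_1^{-1}$ (so $\T_k = -\tfrac12\T_1\,\mathrm{unvec}(\M_k)^\top$ after the appropriate reshape) yields \eqref{eq:Tkformula1}.

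The main obstacle I anticipate is purely notational/combinatorial: carefully justifying the reshaping conventions in \eqref{eq:Tkformula1}-\eqref{eq:Tkformula2}, i.e.\ that extracting the degree-$(k+1)$ coefficient of $\z^\top\T_1^\top\V_2\T_k\kronF{\z}{k}$ versus $\Phi^h(\z)^\top\V_2\Phi^h(\z)$ lines up consistently after $\mathrm{vec}/\mathrm{unvec}$, and that the symmetrization implicit in the symmetric-coefficient assumption (Definition~\ref{def:sym}) lets us write the cross terms $\T_j^\top\V_2\T_i$ and $\T_i^\top\V_2\T_j$ as a single symmetrized sum over $i+j=k+1$ without double-counting. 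I would organize this by first writing the degree-$(k+1)$ component of each of the four terms of \eqref{eq:phiMatch1} as a row vector acting on $\kronF{\z}{k+1}$, transposing everything into column form via $\mathrm{vec}$, invoking symmetry of $\V_2$ and of the $\v_i$ to consolidate, and only then solving the resulting linear equation for $\T_k$; the recursion closes because $\M_k$ depends only on $\T_1,\dots,\T_{k-1}$ and the known data $\v_i$.
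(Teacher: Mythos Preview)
Your proposal is correct and follows essentially the same route as the paper's proof: match the quadratic terms in \eqref{eq:phiMatch1}--\eqref{eq:phiMatch2} to fix $\T_1$, then for each degree $k+1\ge 3$ set the corresponding homogeneous component of \eqref{eq:phiMatch1} to zero, using the tensor notation $\cT_{i,k+1}$ to collect the contribution from $\cE_c^h(\Phi(\z))$, and finally solve for $\T_k$ via $\V_2\T_1=\T_1^{-\top}$. The only cosmetic differences are that the paper matches at degree $k$ and then reindexes $k-1\mapsto k$ (whereas you work directly at degree $k+1$), and that the paper writes the leading cross term as $\mathrm{vec}(\T_k^\top\V_2\T_1)$ rather than $\mathrm{vec}(\T_1^\top\V_2\T_k)$; your anticipated ``reshaping/symmetry'' caveat is exactly Remark~\ref{remark:kroneckerPoly} in the paper, which handles this discrepancy.
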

\vspace{0.2cm}

\begin{remark} \label{remark:kroneckerPoly}
Throughout, we often use the fact that the symmetry of the matrix $\V_2$ leads to $\z^\top \T_1^\top \V_2 \Phi^h(\z) + \Phi^h(\z)^\top \V_2 \T_1 \z  = 2\z^\top \T_1^\top \V_2 \Phi^h(\z)$ since these are scalar quantities. Likewise $(\kronF{\z}{k})^\top \T_k^\top \V_2 \T_1 \z  = \z^\top \T_1^\top \V_2 \T_k \kronF{\z}{k}$ leads to $\text{vec}(\T_1^\top \V_2 \T_k)^\top \kronF{\z}{k+1} = \text{vec}(\T_k^\top \V_2 \T_1)^\top \kronF{\z}{k+1}$. In general, $\text{vec}(\T_1^\top \V_2 \T_k)^\top \b \neq \text{vec}(\T_k^\top \V_2 \T_1)^\top \b$ for an arbitrary vector $\b$.
\end{remark}

The tensors $\T_i$ for the quadratic and cubic part of the transformation have the specific form
\begin{align} \label{eq:T2}
     \T_2  =& -\frac{1}{2} \T_1 \ {\rm{unvec}}( [\kronF{\T_1}{3}]^\top \v_3)^\top\\
    \T_3 =& -\frac{1}{2} \T_1 \ {\rm{unvec}}\left (  {\rm{vec}}(\T_{2}^\top \V_2 \T_{2} ) \ldots \right. \nonumber \\
    & \left.  \qquad \qquad \qquad+   \cT_{3,4}^\top \v_3 + [\kronF{\T_1}{4}]^\top \v_4  \right )^\top.
    \label{eq:T3}
\end{align}

\begin{proof}
We start by proving the results for the matrix $\T_1$. Comparing the quadratic terms in~\eqref{eq:phiMatch1}, we conclude that $\T_1^\top \V_2 \T_1 = \I$ and then comparing quadratic terms in~\eqref{eq:phiMatch2} we obtain $\T_1^\top \W_2 \T_1 = \text{diag}(\xi_1^2(0), \ldots, \xi_n^2(0))=\bXi^2$. Taken together these yield $\bXi^2 = \I^{-1} \T_1^\top \W_2 \T_1 = \T_1^{-1} \V_2^{-1} \T_1^{-\top} \T_1^\top \W_2 \T_1 = \T_1^{-1} \V_2^{-1} \W_2 \T_1$. This shows that $\V_2^{-1}\W_2$ is similar to $\bXi^2$, i.e., they both have eigenvalues $\xi_i^{2}(0)$.  Therefore $\T_1$ is the input-normal/output-diagonal linear balancing transformation that uses the $\cH_\infty$-ARE solutions, see~\eqref{eq:HinftyRiccati1}--\eqref{eq:HinftyRiccati2}, where $\V_2 = \Y_\infty^{-1}$ and $\W_2 =\X_\infty$.

We next determine $\T_2, \ldots, \T_k$ by observing that the cubic and higher-degree terms  of~\eqref{eq:phiMatch1} are zero, i.e.,  
\begin{align}
0 & =2\z^\top \T_1^\top \V_2 \Phi^h(\z) + \Phi^h(\z)^\top \V_2 \Phi^h(\z) + 2 \cE_c^h(\Phi(\z)).
\end{align}
Recall that $\Phi^h(\z) = \T_2 \kronF{\z}{2} + \dots + \T_k \kronF{\z}{k}$; thus  for $k\geq3$ we match the degree $k$th terms to obtain
\begin{align}
 - 2& \z^\top \T_1^\top \V_2 \T_{k-1} \kronF{\z}{k-1} \nonumber \\
& =  \left [ \Phi^h(\z)^\top \V_2 \Phi^h(\z)\right ]^{\{ k\}}  + 2 [ \cE_c^h(\Phi(\z))]^{\{ k\}}, \label{eq:phiMatchTerms1}
\end{align}%
where $[\cdot]^{\{ k \}}$ selects the degree $k$ terms of the expressions inside the bracket. 
Recall that the higher-degree terms in the energy functions start with cubic contributions, i.e., $\cE_c^h(\x) = \frac{1}{2}\left ( \v_3^\top \kronF{\x}{3} + \cdots + \v_d^\top \kronF{\x}{d} \right )$.
We next use that for a given matrix $\M$ we have  $\z^\top \M \z = \text{vec}(\M^\top)^\top (\z \otimes \z)$, see also Remark~\ref{remark:kroneckerPoly}. Consequently, $\z^\top \T_1^\top \V_2 \T_{k-1} \kronF{\z}{k-1} = \text{vec}(\T_{k-1}^\top \V_2 \T_1)^\top \kronF{\z}{k}$. 
For general $k$, equation \eqref{eq:phiMatchTerms1} can therefore be rewritten as
\begin{align}
 -2 & \text{vec}(\T_{k-1}^\top \V_2 \T_1)^\top \kronF{\z}{k} \\
& = \sum_{\substack{i,j>1\\ i+j=k}}  [\kronF{\z}{i}]^\top \T_{i}^\top \V_2 \T_{j} \kronF{\z}{j}  +  \left (   \sum_{i=3}^k \v_i^\top \kronF{[\Phi(\z)]}{i} \right )^{\{ k\}}, \nonumber
\end{align}
since $\V_2$ is symmetric. Expanding $\Phi(\z)$ and using again $\z^\top \M \z = \text{vec}(\M^\top)^\top (\z \otimes \z)$ yields
{\small 
\begin{align} 
 -2 & \text{vec}(\T_{k-1}^\top \V_2 \T_1)^\top \kronF{\z}{k} \nonumber \\
     & =   \sum_{\substack{i,j>1\\ i+j=k}}  \text{vec} \left ( \T_{j}^\top \V_2 \T_{i} \right )^\top \kronF{\z}{k}
     \label{Tkhigher} \nonumber\\
    & +  \left (   \sum_{i=3}^k \v_i^\top \kronF{[\T_1\z + \T_2 \kronF{\z}{2} +\dots + \T_{k}\kronF{\z}{k}]}{i}\right )^{\{ k\}}. 
\end{align}
}

\noindent 
We next use $\V_2 \T_1 = \T_1^{-\top}$ and the definition of the unique tensor products with $i$ terms and $n^{k}$ columns, $ \cT_{i,k}$ and observe that \eqref{Tkhigher} holds if
\begin{align} 
    &-2 \text{vec}(\T_{k-1}^\top \T_1^{-\top})^\top \nonumber \\
    & =   \sum_{\substack{i,j>1\\ i+j=k}}  \text{vec} \left ( \T_{j}^\top \V_2 \T_{i} \right )^\top +    \sum_{i=3}^k \v_i^\top \cT_{i,k} .\label{eq:phiMatchTerms1a}
\end{align}

Reindexing from $k-1$ to $k$ and reshaping \eqref{eq:phiMatchTerms1a} proves
\eqref{eq:Tkformula1}--\eqref{eq:Tkformula2}.
Specifically, for $k=2$, the first sum in $\M_k$ in~\eqref{eq:Tkformula2} is zero,
to produce~\eqref{eq:T2}.
For $k=3$, equation~\eqref{eq:Tkformula2} yields~\eqref{eq:T3}.
\end{proof}

\noindent 
The next section focuses on computing the singular value functions
$\xi_i^h(z_i)$
associated with the input-normal form, which we obtain by polynomial expansion. 

\subsubsection{Computing the state-dependent singular value functions}
With the $\{\v_i\}_{i=2}^d$ and $\{\w_i\}_{i=2}^d$ available from the energy functions \eqref{eq:pastenerexp} and \eqref{eq:wi_coeffs}, and the $\{\T_i\}_{i=1}^{k}$ from Theorem~\ref{thm:polyCoeffsTrafo}, it remains to compute the polynomial coefficients $\{\c_i\}_{i=1}^{\ell}$ of the singular value functions defined in~\eqref{eq:expandSval}--\eqref{eq:sval_expansion}. The singular value functions are used to determine which modes to keep in the balanced ROM. To determine the singular value functions, consider the cubic and higher-degree terms of equation \eqref{eq:phiMatch2}, i.e., 
\begin{align} \label{eq:svfunctionsCoeff}
&\sum_{i=1}^n z_i^2 \xi_i^h(z_i) \left (2\xi_i(0) + \xi_i^h(z_i) \right )  \\
& =  2\z^\top \T_1^\top \W_2 \Phi^h(\z) + \Phi^h(\z)^\top \W_2 \Phi^h(\z) + 2 \cE_o^h(\Phi(\z)), \nonumber 
\end{align}
for which we know the terms on right-hand side ($\T_i$ from Theorem~\ref{thm:polyCoeffsTrafo} and $\w_i$ from \eqref{eq:wi_coeffs}) as well as $\xi_i(0)$ in the left-hand side from Theorem~\ref{thm:polyCoeffsTrafo}. 
The following theorem shows how to  compute the coefficients $\c_i$ associated with the state-dependent part of the singular value functions in~\eqref{eq:expandSval}. 
\begin{theorem}
    Let $\z = [z_1, z_2, \ldots, z_n]^\top$ be the transformed state and $\c_k= [c_1^{(k)}, \ c_2^{(k)}, \ldots, c_n^{(k)}]^\top$ be the $n$ dimensional unknown coefficients vector for the degree $k$ terms as defined in~\eqref{eq:expandSval} and~\eqref{eq:sval_expansion}. The coefficients for the terms that are linear ($k=1$) in the state can be obtained as  
    \begin{equation}  \label{eq:c1formula}
        \c_1  = \bXi^{-1} \left ({\normalfont \text{vec}} (\T_{2}^\top \W_2 \T_1)^\top  + \frac{1}{2}\w_3^\top \kronF{\T_1}{3} \right)_{\mathcal{I}_1}
    \end{equation}
    for the indices $\mathcal{I}_1=\{ j \ | \ j=(i-1)(n^2+n) +i, \ i=1,\ldots, n\}$. In general, for $k\geq 1$ we obtain the explicit recursion
    \begin{align} \label{eq:generalck}
    \c_{k} = & \frac{1}{2} \bXi^{-1} \left [ \left(\C_k\right)_{\mathcal{I}_{k}} - \sum_{i+j=k} \c_i\odot\c_j  \right ]
    \end{align}
    where
    \begin{align}  \label{eq:bigCk}
        \C_{k} = \hspace*{-1ex} \sum_{\substack{i,j\geq 1\\ i+j=k+2}}  \hspace*{-1ex}  {\normalfont \text{vec}} \left ( \T_{j}^\top \W_2 \T_{i} \right )^\top   + \sum_{i=3}^{k+2} \left(\cT_{i,k+2}\right)^\top\w_i,   
    \end{align} 
 $\mathcal{I}_{k}$ is the index set $\mathcal{I}_{k}=\{j \ | \ j=(i-1)\sum_{l=1}^{k+1} n^l+i, \ i=1,\ldots, n \}$, and $\odot$ denotes the Hadamard product (componentwise multiplication).
\end{theorem}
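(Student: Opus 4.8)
The plan is to match polynomial coefficients of equal degree on both sides of \eqref{eq:svfunctionsCoeff} (the cubic-and-higher part of \eqref{eq:phiMatch2}), degree by degree, in exactly the spirit of the proof of Theorem~\ref{thm:polyCoeffsTrafo}, but now treating the $\{\T_i\}$ and $\{\w_i\}$ and $\xi_i(0)$ as known and the coefficient vectors $\{\c_i\}$ as the unknowns. I would induct on $k$, extracting $\c_k$ at degree $k+2$ once $\c_1,\ldots,\c_{k-1}$ are in hand.

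First I would expand the right-hand side of \eqref{eq:svfunctionsCoeff} and isolate its degree-$(k+2)$ part in the Kronecker monomial basis $\kronF{\z}{k+2}$. Writing $2\z^\top\T_1^\top\W_2\Phi^h(\z) = \z^\top\T_1^\top\W_2\Phi^h(\z) + \Phi^h(\z)^\top\W_2\T_1\z$ and using the identity $\z^\top\M\kronF{\z}{r} = {\rm vec}(\M^\top)^\top\kronF{\z}{r+1}$ (cf.\ Remark~\ref{remark:kroneckerPoly}), this term contributes $\big({\rm vec}(\T_{k+1}^\top\W_2\T_1)+{\rm vec}(\T_1^\top\W_2\T_{k+1})\big)^\top\kronF{\z}{k+2}$; the term $\Phi^h(\z)^\top\W_2\Phi^h(\z)$ contributes $\sum_{i,j\ge 2,\,i+j=k+2}{\rm vec}(\T_j^\top\W_2\T_i)^\top\kronF{\z}{k+2}$; and since $\big[\kronF{[\Phi(\z)]}{i}\big]^{\{k+2\}} = \cT_{i,k+2}\kronF{\z}{k+2}$ (the same composition-bookkeeping that produced the $\cT_{i,k}$ in Theorem~\ref{thm:polyCoeffsTrafo}, with $\cT_{i,k+2}=\bzero$ for $i>k+2$), the term $2\cE_o^h(\Phi(\z))=\sum_{i\ge 3}\w_i^\top\kronF{[\Phi(\z)]}{i}$ contributes $\sum_{i=3}^{k+2}(\cT_{i,k+2})^\top\w_i$ (as a row). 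Summing the three contributions gives precisely the row vector $\C_k$ of \eqref{eq:bigCk} as the degree-$(k+2)$ coefficient of the right-hand side. Next I would expand the left-hand side: the $i$th summand $z_i^2\,\xi_i^h(z_i)\big(2\xi_i(0)+\xi_i^h(z_i)\big)$ is univariate in $z_i$, so the left-hand side carries \emph{no} mixed monomials, and using $\xi_i^h(z_i)=\sum_{r\ge 1}c_i^{(r)}z_i^r$ its coefficient of $z_i^{k+2}$ is $2\xi_i(0)c_i^{(k)}+\sum_{a+b=k}c_i^{(a)}c_i^{(b)}$. The monomial $z_i^{k+2}$ sits in $\kronF{\z}{k+2}$ at the lexicographic position of the all-$i$ multi-index, namely $1+(i-1)(1+n+\cdots+n^{k+1}) = i+(i-1)\sum_{l=1}^{k+1}n^l$, which is exactly the stated index set $\mathcal{I}_k$; stacked over $i$, the left-hand side restricted to $\mathcal{I}_k$ equals $2\bXi\c_k+\sum_{i+j=k}\c_i\odot\c_j$, where $\bXi$ carries $\xi_1(0),\ldots,\xi_n(0)$ on its diagonal.

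Equating the two degree-$(k+2)$ expansions and reading off the $\mathcal{I}_k$ components yields the diagonal linear system $2\bXi\c_k+\sum_{i+j=k}\c_i\odot\c_j = (\C_k)_{\mathcal{I}_k}$, and solving it (with $\bXi$ invertible) gives the recursion \eqref{eq:generalck}. For the base case $k=1$ the convolution sum is empty; combining this with the observation of Remark~\ref{remark:kroneckerPoly} that ${\rm vec}(\T_1^\top\W_2\T_2)$ and ${\rm vec}(\T_2^\top\W_2\T_1)$ represent the same polynomial and hence agree on the unambiguous positions $\mathcal{I}_1$, together with $\cT_{3,3}=\kronF{\T_1}{3}$, collapses $\tfrac12(\C_1)_{\mathcal{I}_1}$ into the form stated in \eqref{eq:c1formula}.

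The step I expect to require the most care is the consistency of this degree matching. Because the left-hand side contains only the pure powers $z_i^{k+2}$, for \eqref{eq:phiMatch2} to be solvable with the diagonal ansatz \eqref{eq:sval_expansion} every mixed-monomial coefficient of $\C_k$ must vanish; this is not established by direct computation but follows a priori from the existence statement of Theorem~\ref{thm:inputnormal} (the input-normal/output-diagonal form exists) together with the fact that the $\{\T_i\}$ constructed in Theorem~\ref{thm:polyCoeffsTrafo} realize that form. Granting this, only the $\mathcal{I}_k$-components carry information and the recursion above is exactly what remains; the residual work — correctly tracking the nested Kronecker powers inside $\cE_o^h(\Phi(\z))$ and the index ranges of the $\cT_{i,k+2}$ — is routine and mirrors the corresponding part of the proof of Theorem~\ref{thm:polyCoeffsTrafo}.
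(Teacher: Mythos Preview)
Your proposal is correct and follows essentially the same approach as the paper: both proofs match the degree-$(k+2)$ terms of \eqref{eq:svfunctionsCoeff}, rewrite the right-hand side via the vec identity and the $\cT_{i,k+2}$ bookkeeping exactly as in Theorem~\ref{thm:polyCoeffsTrafo}, and read off the pure-power coefficients on the left to obtain a diagonal linear system for $\c_k$. The only differences are presentational---you go straight to the general $k$ and then specialize, whereas the paper first works out $k=3,4,5$ explicitly and then reindexes $k-2\to k$---and your added paragraph on why the mixed-monomial coefficients of $\C_k$ must vanish (via the existence guaranteed by Theorem~\ref{thm:inputnormal}) is a point the paper leaves implicit.
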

\begin{proof}
We start by matching degree $k$ polynomial terms on both sides of \eqref{eq:svfunctionsCoeff} to obtain
\begin{subequations} \label{eq:phiMatchTerms2}
\begin{align} \label{eq:phiMatchTerms2a}
&\sum_{i=1}^n z_i^2 \left [ \xi_i^h(z_i) \left (2\xi_i(0) + \xi_i^h(z_i) \right ) \right ]^{\{ k-2\}}  \\
& = \sum_{i=1}^n z_i^2 \left [ \left(c_i^{(1)} z_i + c_i^{(2)} z_i^2 + \dots + c_i^{(\ell)} z_i^\ell\right) \right. \label{eq:phiMatchTerms2b} \\
& \left. \qquad \cdot \left (2\xi_i(0) +c_i^{(1)} z_i + c_i^{(2)} z_i^2 + \dots + c_i^{(\ell)} z_i^\ell \right ) \right ]^{\{ k-2\}} \nonumber   \\ 
& = 2\z^\top \T_1^\top \W_2 \T_{k-1} \kronF{\z}{k-1} + [ \Phi^h(\z)^\top \W_2 \Phi^h(\z)]^{\{ k\}} \nonumber \\
& \ \ \ + 2 [\cE_o^h(\Phi(\z))]^{\{ k\}}.  \label{eq:phiMatchTerms2c}
\end{align}
\end{subequations}
Similar to the proof of Theorem~\ref{thm:polyCoeffsTrafo}, for $k\geq 3$ we obtain  the system
\begin{align}
&\sum_{i=1}^n z_i^2 \left [ (c_i^{(1)} z_i + c_i^{(2)} z_i^2 + \dots + c_{i}^{(k-2)} z_i^{(k-2)} ) \right. \cdot \nonumber \\
& \left. \left (2\xi_i(0) +c_i^{(1)} z_i + c_i^{(2)} z_i^2 + \dots + c_{i}^{(k-3)} z_i^{(k-3)} \right ) \right ]^{\{ k-2\}} \nonumber \\
& = \left ( \sum_{\substack{i,j\geq 1\\ i+j=k}}  \text{vec} \left ( \T_{j}^\top \W_2 \T_{i} \right )^\top +    \sum_{i=3}^k \w_i^\top \cT_{i,k}  \right ) \kronF{\z}{k}, 
\label{eq:ci_equality}
\end{align}
where in~\eqref{eq:phiMatchTerms2b} we replaced the first $\ell$ with $k-2$ and the second $\ell$ with $k-3$, as those correspond to the highest-degree polynomials we expect to match inside the $\{k-2\}$ selection. 
Moreover, we exploited the symmetry of $\W_2$ and the fact that $2{\normalfont \text{vec}} (\T_{k+1}^\top \W_2 \T_1)^\top \kronF{\z}{2} = {\normalfont \text{vec}} (\T_{1}^\top \W_2 \T_{k+1})^\top \kronF{\z}{2}+ {\normalfont \text{vec}} (\T_{k+1}^\top \W_2 \T_1)^\top \kronF{\z}{2}$.

Focusing on the case $k=3$ then yields an equation for the coefficients $\c_1$:
\begin{align}  \label{eq:sumci}
2 \sum_{i=1}^n \xi_i(0) c_i^{(1)}   z_i^3  = \left ( 2 \text{vec}(\T_{2}^\top \W_2 \T_1)^\top  + \w_3^\top \kronF{\T_1}{3} \right )\kronF{\z}{3}. 
\end{align}

For $i=1,2,\ldots,n$, matching the coefficients of $z_i^3$ on both sides of~\eqref{eq:sumci} yields~\eqref{eq:c1formula}, the formula for
$\c_1$, where the index set $\mathcal{I}_1$ corresponds to the location of the monomials in $\kronF{\z}{3}$.
Next, we focus on quartic ($k=4$) terms in~\eqref{eq:ci_equality} and obtain
\begin{align}
& \sum_{i=1}^n  \left (2c_i^{(2)}\xi_i(0) + (c_i^{(1)})^2  \right )z_i^4 \nonumber \\
& =  \left ( 2 \text{vec}(\T_{3}^\top \W_2 \T_1)^\top  + {\rm{vec}}(\T_2^\top \W_2 \T_2)^\top \right. \nonumber\\
& \left. \quad + \w_3^\top \cT_{3,4} + \w_4^\top \kronF{\T_1}{4} \right ) \kronF{\z}{4}.
\end{align}
Again, for $i=1,2,\ldots,n$ we match the coefficients of $z_i$ on both sides and so for $\c_2 = [c_1^{(2)}, \ c_2^{(2)}, \ldots, c_n^{(2)}]^\top$, we obtain the formula 
\begin{align}
 \c_2  = & \frac{1}{2} \bXi^{-1} \Big [ \Big (2 \text{vec}(\T_{3}^\top \W_2 \T_1)  + {\rm{vec}}(\T_2^\top \W_2 \T_2) \nonumber \\
 & + \cT_{3,4}^\top \w_3 + (\kronF{\T_1}{4})^\top \w_4 \Big )_{\mathcal{I}_2} - \c_1^2 \Big ]
\end{align}
for the indices $\mathcal{I}_2= \{ j \ | \ j=(i-1)(n^3 + n^2 +n) + i, \ i=1,\ldots, n\}$, which corresponds to the location of the monomials in $\kronF{\z}{4}$ and where $\c_1^2$ denotes the componentwise square of $\c_1$. 
The case $k=5$ yields
\begin{align}
\sum_{i=1}^n & \left (2 c_i^{(1)} c_i^{(2)} + 2\xi_i(0)c_i^{(3)} \right ) z_i^5 \nonumber  \\
 = & \Big (  2\text{vec} (\T_{4}^\top \W_2 \T_1)^\top + 2 \text{vec} \left (\T_{2}^\top \W_2^\top \T_{3}\right )^\top \nonumber \\
& + \sum_{i=3}^5 \w_i^\top \cT_{i,k}  \Big ) \kronF{\z}{k},
\end{align}
leading to the explicit expression for $\c_3$ as
\begin{align}
 \c_3 = & \frac{1}{2} \bXi^{-1} \Big [\Big (2\text{vec} (\T_{4}^\top \W_2 \T_1) \nonumber \\
& + 2 \text{vec} \Big ( \T_{3}^\top \W_2^\top \T_{2}  \Big ) \nonumber \\
& +    \sum_{i=3}^5 \cT_{i,k}^\top \w_i \Big )_{\mathcal{I}_3} -2\c_1\odot\c_2  \Big ] 
\end{align}
for the indices $\mathcal{I}_3 = \{ j \ | \ j=(i-1)(n^4+ n^3 + n^2+n) +i, \ i=1, \ldots, n \}$. 
Continuing in this fashion  yields the recursion formula~\eqref{eq:generalck} for $\c_k$ for general $k$ after the index change $k-2 \to k$.
\end{proof}
%
%

\subsubsection{Complete algorithm and implementation}
A complete algorithm for the tensor-based computation of the nonlinear input-normal/output-diagonal transformation $\Phi(\z)$ is given in Algorithm~\ref{algo:inputnormalTrafo} and the associated singular value functions $\xi_i(z_i)$ in Algorithm~\ref{algo:inputnormalSvals}.
    
\begin{algorithm}[htb]
	\caption{Computation of polynomial coefficients $\{ \T_i \}_{i=1}^k$ in approximations to $\Phi(\z)$ from \eqref{eq:trafo}.}
	\label{algo:inputnormalTrafo}
	\begin{algorithmic}[1]
		\Require Coefficients $\{\v_i\}_{i=2}^d$ and $\{\w_i\}_{i=2}^d$  from \eqref{eq:pastenerexp} and \eqref{eq:wi_coeffs}. 
		\Ensure Coefficients $\{\T_i\}_{i=1}^{k}$ of the transformation and $\bXi = \diag(\xi_1(0), \ldots, \xi_n(0))$
		\State Compute Cholesky factors $\V_2 = \R \R^\top$ and $\W_2 = \L \L^\top$
		\State Compute the singular value decomposition $\L^\top \R^{-\top} = \mathcal{U} \bXi \mathcal{V}^\top$ and set 
        \begin{equation*}
            \T_1 = \R^{-\top} \mathcal{V} 
        \end{equation*}
        \State Compute the higher-degree polynomial coefficients:
        {\footnotesize{
        \begin{align*}
             \T_2 & = -\frac{1}{2} \T_1 \ {\rm{unvec}}( [\kronF{\T_1}{3}]^\top \v_3)^\top\\
            \T_3 &= -\frac{1}{2} \T_1 \ {\rm{unvec}}\left (  {\rm{vec}}(\T_{2}^\top \V_2 \T_{2} ) +  \cT_{3,4}^\top \v_3 + [\kronF{\T_1}{4}]^\top \v_4  \right )^\top\\
            \T_{k} &= -\frac{1}{2} \T_1 {\rm{unvec}} \left ( \sum_{\substack{i,j>1\\ i+j=k+1}}  {\rm{vec}} \left ( \T_{j}^\top \V_2 \T_{i} \right ) +     \sum_{i=3}^{k+1} \cT_{i,k+1}^\top \v_i \right )^\top
        \end{align*}
        }}
        \State Symmetrize the coefficients $\T_{k}$ at each step (see Remark~\ref{remark:kroneckerPoly}).
    \end{algorithmic}
\end{algorithm}

\begin{algorithm}[htb]
	\caption{Computation of the input-normal singular value functions $\xi_i(z_i)$ from \eqref{eq:sval_expansion}.}
	\label{algo:inputnormalSvals}
	\begin{algorithmic}[1]
		\Require Coefficients $\{\v_i\}_{i=2}^d$ and $\{\w_i\}_{i=2}^d$  from  \eqref{eq:pastenerexp} and \eqref{eq:wi_coeffs}, $\{\T_i\}_{i=1}^k$ and $\bXi$ from Algorithm~\ref{algo:inputnormalTrafo}. 
		\Ensure Coefficients of singular value functions $\{ \c_i\}_{i=1}^\ell$.
        \State Compute coefficients for the linear terms 
            \begin{equation*}
                \c_1  = \bXi^{-1} \left ({\normalfont \text{vec}} (\T_{2}^\top \W_2 \T_1)  + \frac{1}{2} (\kronF{\T_1}{3})^\top \w_3 \right)_{\mathcal{I}_1} 
            \end{equation*}
        for the indices ${\mathcal{I}_1}=\{ j \ | \ j=(i-1)(n^2+n) +i, \ i=1, \ldots, n\}$. 
        \State For $m=1,2,\ldots, \ell$, compute the coefficients
             \begin{align*}
                 \c_{m} &= \frac{1}{2} \bXi^{-1} \left [  \left (\sum_{\substack{i,j\geq 1\\ i+j=m+2}}  {\normalfont \text{vec}} \left ( \T_{j}^\top \W_2^\top \T_{i} \right ) \right. \right. \nonumber \\ 
                & +  \left. \left.   \sum_{i=3}^{m+2} \cT_{i,m+2}^\top \w_i \right)_{\mathcal{I}_{m}} - \sum_{i+j=m} \c_i\odot\c_j  \right ] 
            \end{align*}
            for the index set $\mathcal{I}_{m}=\{j \ | \ j=(i-1)\sum_{l=1}^{m+1} n^l+i, \ i=1,\ldots, n \}.$
	\end{algorithmic}
\end{algorithm}

\begin{remark}
In the linear case the energy functions are quadratic, e.g., 
$\cE_\gamma^-(\x) = \frac{1}{2}\v_2^\top\kronF{\z}{2}$, and hence $\v_i=\bzero$ for $i\geq3$. We see from Algorithm~\ref{algo:inputnormalTrafo} that $\T_i = \bzero$ for $i\geq 2$. Thus, we recover the usual linear state transformation $\Phi(\z) = \T_1\z$. Moreover, we know that the singular value functions are constant, and we see from Algorithm~\ref{algo:inputnormalSvals} that indeed $\c_i=\bzero$ for $i\geq 1$. In sum, for the linear case, the energy functions are quadratic, the transformation linear, and the singular value functions constant. However, this cascade of degrees does not hold for the general nonlinear case. Assume the energy function is exactly cubic, i.e., $\cE_\gamma^-(\x) =\frac{1}{2}(\v_2^\top \kronF{\z}{2} + \v_3^\top \kronF{\z}{3})$. In Algorithm~\ref{algo:inputnormalTrafo} we are still able to compute $\T_k, k\geq3$ as $\T_3\neq \bzero$ and consequently the first sum on the right-hand-side of the $\T_k$ computation is also nonzero. We similarly see that the $\c_i$ coefficients in Algorithm~\ref{algo:inputnormalSvals} can be nonzero. Thus the degree of the energy function has, in general, no direct impact on the degree of the transformation and singular value functions.
\end{remark}

\subsubsection{Demonstration}
We demonstrate the effectiveness of using higher degree transformations to bring energy functions into input-normal/output-diagonal form. Using the example found in \cite[Sec. IV.B]{KGB_NonlinearBT_Part1} (modified from \cite{kawano2016model}), we consider
\begin{equation}
\label{eq:quadratic}
    \dot{\x} = \A \x + \N (\x \otimes \x) + \B u , \qquad y = \C\x 
\end{equation}
where 
\begin{align}
\label{eq:example1}
\begin{split}
\A\x & = \begin{bmatrix} -x_1 + x_2 \\ -x_2 \end{bmatrix}, \quad \N(\x\otimes \x) = \begin{bmatrix} -x_2^2 \\ 0\end{bmatrix}, \\  
\B & = \begin{bmatrix} 1 \\ 1 \end{bmatrix}, \quad \C  = [1 \ 1].
\end{split}
\end{align}
A plot of a degree 8 approximation to the past energy function, ${\cal E}_\gamma^-(\x)$ with $\eta=0.1$ (or $\gamma=\sqrt{2}$), is provided in Fig.~\ref{fig:pastEnergyOrig}. It is clear that this function is not quadratic.  To bring it into input-normal form (in which the past energy function is quadratic, see~\eqref{eq:balRealCont}), we qualitatively compare using a linear transformation $\x=\T_1\z$ (Fig.~\ref{fig:pastEnergyLinearTrans}) and a quadratic transformation $\x=\T_1\z+\T_2\kronF{\z}{2}$ (Fig.~\ref{fig:pastEnergyQuadraticTrans}).  While the linear state transformation does not adequately transform the energy into quadratic form, we observe a better local approximation to \eqref{eq:balRealCont} using the quadratic state transformation. The quality of this transformation extends over the region $(-0.2,0.2)\times (-0.2,0.2)$ in the $z$-coordinates.  

To quantitatively assess the ability of polynomial transformations to place the past energy function into input-normal form, i.e., to make it quadratic in the state, we tabulate the maximum error between the transformed energy function and the desired form \eqref{eq:balRealCont} over regions of different sizes in Table~\ref{tab:TransformedPastEnergy}.  As expected, higher degree transformations provide a better solution to the balancing problem in regions close to the origin.  We also observe the benefits of using higher degree transformations as we move closer to the origin.  These transformations are developed as local approximations, yet we see improvements over the linear transformation in a larger region.  Furthermore, the accuracy away from the origin can be achieved with a quadratic or cubic transformation in this example.  As we move closer to the origin, the superiority of the higher degree transformations arises. 

\begin{figure}
    \centering
    \includegraphics[width=0.475\textwidth]{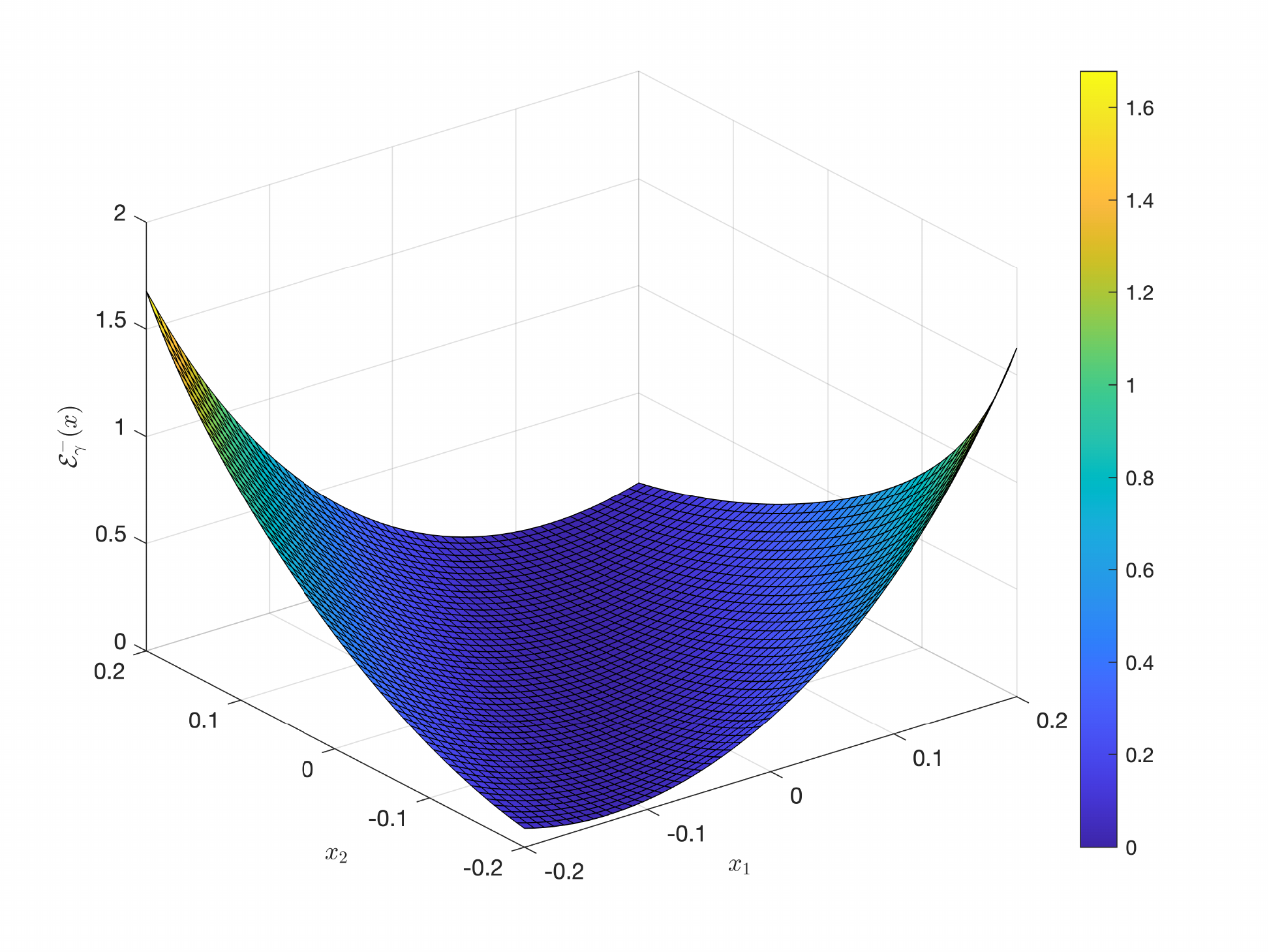}
    \caption{\label{fig:pastEnergyOrig}Past energy function $\mathcal{E}_\gamma^-$ for the model \eqref{eq:quadratic}--\eqref{eq:example1} in the original coordinates.}
\end{figure}
\begin{figure}
    \centering
    \includegraphics[width=0.475\textwidth]{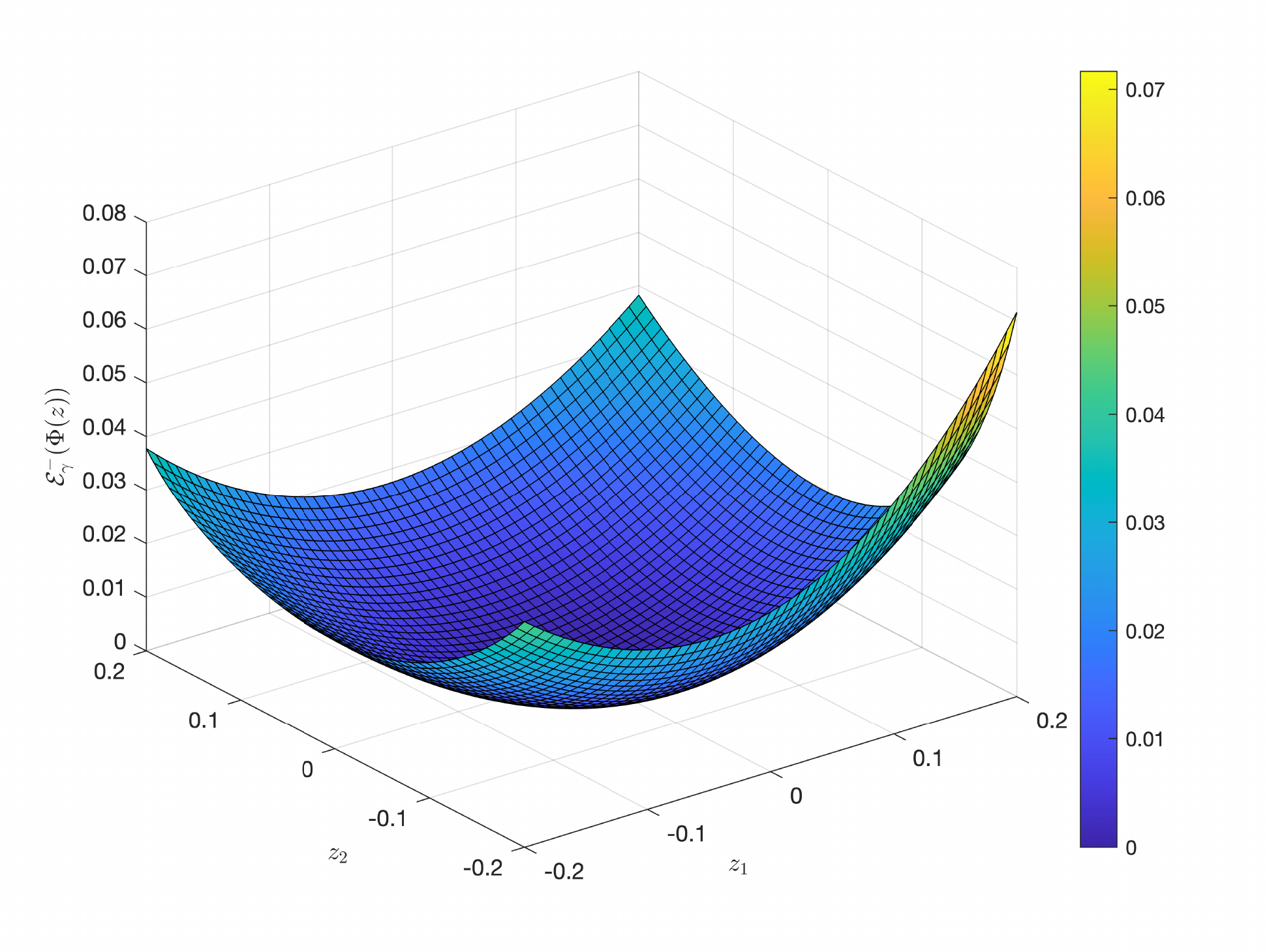}
    \caption{\label{fig:pastEnergyLinearTrans}Past energy function $\mathcal{E}_\gamma^-$ for the model \eqref{eq:quadratic}--\eqref{eq:example1} using a linear state transformation, $\x =\T_1\z$.}
\end{figure}
\begin{figure}
    \centering
    \includegraphics[width=0.475\textwidth]{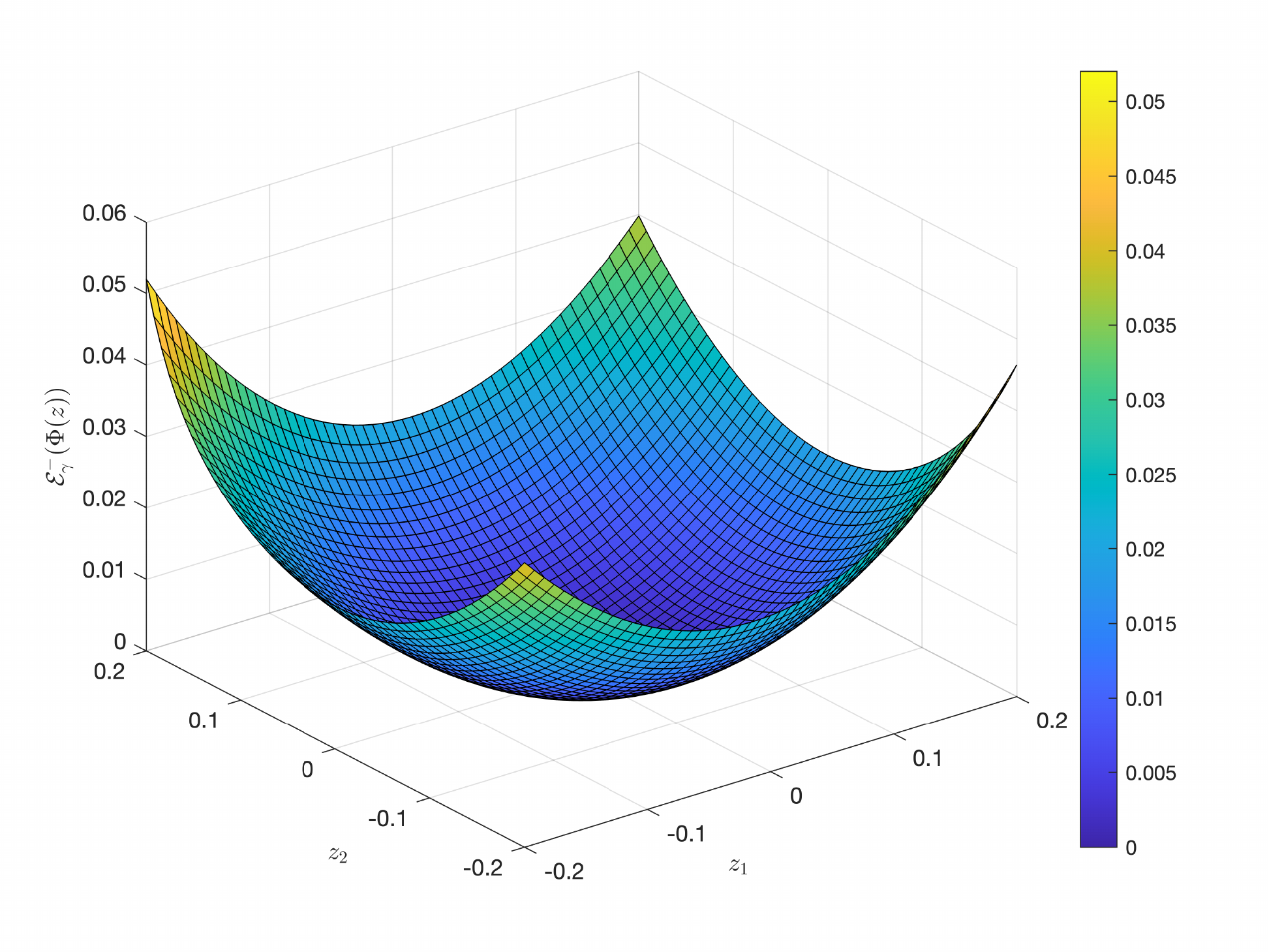}    \caption{\label{fig:pastEnergyQuadraticTrans}Past energy function $\mathcal{E}_\gamma^-$ for the model \eqref{eq:quadratic}--\eqref{eq:example1} using a quadratic state transformation, $\x=\T_1\z + \T_2(\z\otimes\z)$.}
\end{figure}

\begin{table}
  \centering
  \caption{The effect of varying degree transformations on maximum error in the transformed past energy function $\cE_\gamma^-$ over the domain $(-a,a)^2$.}
  \label{tab:TransformedPastEnergy}
\begin{tabular}{c|ccc}
degree & $a=0.2$ & $a=0.05$ & $a=0.01$ \\
\hline
1 & 4.0566e-02 & 1.5498e-04 & 1.0886e-06 \\
2 & 1.8411e-02 & 7.6587e-06 & 9.2309e-09 \\
3 & 1.6217e-02 & 1.7734e-06 & 4.2417e-10 \\
4 & 1.8339e-02 & 4.2031e-07 & 1.9737e-11 \\
5 & 1.6364e-02 & 1.0820e-07 & 1.0743e-12 \\
6 & 1.8547e-02 & 2.3594e-08 & 6.2827e-14 \\
7 & 1.5997e-02 & 1.7912e-08 & 4.7472e-15 \\
8 & 1.9368e-02 & 1.1080e-08 & 1.0545e-15
\end{tabular}
\end{table}

\subsection{Input-output balancing transformation} \label{sec:inputoutput}
The transformation in Section~\ref{sec:inputnormal} brings the system into the input-normal/output-diagonal form, see Theorem~\ref{thm:inputnormal}. The next theorem suggests a transformation that brings the system into the widely-used input-output balanced form, where the singular values appear in both the controllability and observability energy functions. 

\begin{theorem} \cite[Thm. 9]{fujimoto2010balanced} \label{thm:fullybalanced}
	Suppose that the Jacobian linearization of the nonlinear system is controllable, observable, and asymptotically stable. Then there is a neighborhood $\cW$ of the origin and a smooth coordinate transformation $\x = \bar{\Phi}(\bar{\z})$ on $\cW$ converting the controllability and observability energy functions into the form
	\begin{align} \label{eq:energyfct_fullybalanced}
	\cE_c(\bar{\Phi}(\bar{\z}))  & = \frac{1}{2} \sum_{i=1}^n \frac{\bar{z}_i^2}{{\sigma}_i(\bar{z}_i)}, \\
	\cE_o(\bar{\Phi}(\bar{\z}))  & = \frac{1}{2} \sum_{i=1}^n {\sigma}_i(\bar{z}_i) \bar{z}_i^2.
	\end{align}
	Moreover, if $\cW = \Real^n$, then the Hankel norm of the nonlinear system is given by
	\begin{equation}
	    \Vert \Sigma \Vert_H := \sup_{\u \in L_2(\Real^+), \u\neq \bzero} \frac{\Vert \cH(\u)\Vert}{\Vert \u \Vert} = \sup_{\bar{z}_1} {\sigma}_1(\bar{z}_1),
	\end{equation}
    where $\cH$ is the Hankel operator for the nonlinear system. 
\end{theorem}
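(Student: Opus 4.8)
The plan is to obtain $\bar{\Phi}$ by post-composing the input-normal/output-diagonal transformation $\Phi$ of Theorem~\ref{thm:inputnormal} with a \emph{decoupled} change of coordinates $\z = \psi(\bar{\z})$, $z_i = \psi_i(\bar{z}_i)$. A diagonal transformation suffices because, in the coordinates of Theorem~\ref{thm:inputnormal}, both $\cE_c\circ\Phi$ and $\cE_o\circ\Phi$ are already sums of terms depending on a single $z_i$, so a componentwise substitution preserves this structure and merely redistributes the singular value functions between $\cE_c$ and $\cE_o$. First I would note that controllability and observability of the Jacobian linearization make its Hankel singular values $\xi_i(0)$ strictly positive; hence $\xi_i(z_i)>0$ near the origin and $\beta_i(z_i):=z_i\sqrt{\xi_i(z_i)}$ is smooth with $\beta_i(0)=0$ and $\beta_i'(0)=\sqrt{\xi_i(0)}\neq 0$. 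By the inverse function theorem $\beta_i$ has a smooth local inverse $\psi_i:=\beta_i^{-1}$; I then set $z_i=\psi_i(\bar{z}_i)$ and \emph{define} the new singular value functions by $\sigma_i(\bar{z}_i):=\xi_i(\psi_i(\bar{z}_i))$.

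With this choice $\bar{z}_i=\beta_i(z_i)=z_i\sqrt{\xi_i(z_i)}$ gives $\bar{z}_i^2=z_i^2\,\sigma_i(\bar{z}_i)$, hence $z_i^2=\bar{z}_i^2/\sigma_i(\bar{z}_i)$ and $\xi_i^2(z_i)z_i^2=\sigma_i(\bar{z}_i)\bar{z}_i^2$. Substituting these into \eqref{eq:balRealCont}--\eqref{eq:balRealObs} converts them into exactly \eqref{eq:energyfct_fullybalanced}, and $\bar{\Phi}:=\Phi\circ\psi$ is a smooth coordinate transformation on a neighborhood $\cW$ of the origin; its Jacobian at $0$ is $\T_1\,\diag(\xi_1(0)^{-1/2},\ldots,\xi_n(0)^{-1/2})$, which is nonsingular. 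This completes the first part.

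For the Hankel-norm identity, assume $\cW=\Real^n$ so the balanced form is global. I would invoke the characterization $\Vert\Sigma\Vert_H^2=\sup_{\x_0\neq\bzero}\cE_o(\x_0)/\cE_c(\x_0)$: if $\u$ steers $\bzero$ to $\x_0$ over $(-\infty,0]$ then $\Vert\cH(\u)\Vert^2=2\cE_o(\x_0)$ while $\Vert\u\Vert^2\geq 2\cE_c(\x_0)$ with equality for the minimal-energy input, so the supremum of $\Vert\cH(\u)\Vert^2/\Vert\u\Vert^2$ over inputs reaching $\x_0$ equals $\cE_o(\x_0)/\cE_c(\x_0)$, and one then maximizes over $\x_0$. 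In the balanced coordinates, writing $a_i:=\bar{z}_i^2\geq 0$ and $s_i:=\sigma_i(\bar{z}_i)>0$, this ratio equals $\left(\sum_i s_i a_i\right)\big/\left(\sum_i a_i/s_i\right)$; from $\sum_i s_i a_i=\sum_i s_i^2\,(a_i/s_i)\leq(\max_i s_i^2)\sum_i a_i/s_i$ it is bounded by $\max_i\sigma_i(\bar{z}_i)^2$, and choosing $\bar{z}_j=0$ for $j\neq i$ makes it equal $\sigma_i(\bar{z}_i)^2$, so $\Vert\Sigma\Vert_H^2=\sup_i\sup_{\bar{z}_i}\sigma_i(\bar{z}_i)^2$. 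Since the construction orders the singular value functions so that $\sigma_1(\cdot)\geq\sigma_2(\cdot)\geq\cdots\geq\sigma_n(\cdot)$ pointwise (inherited from $\xi_1(0)\geq\cdots\geq\xi_n(0)$), this equals $\sup_{\bar{z}_1}\sigma_1(\bar{z}_1)^2$, and taking square roots gives the claim.

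The main obstacle is upgrading to the global statement $\cW=\Real^n$. The inverse function theorem yields only local invertibility of $\beta_i$, so one must also assume or verify that $z\mapsto z\sqrt{\xi_i(z)}$ is a diffeomorphism of all of $\Real$ (monotonicity and properness), and the identity $\Vert\Sigma\Vert_H^2=\sup_{\x_0}\cE_o(\x_0)/\cE_c(\x_0)$ requires the full nonlinear system (not just its linearization) to be reachable and the nonlinear Hankel operator to be well defined on $L_2(\Real^+)$ --- this is where the substantive analysis in \cite{fujimoto2010balanced,scherpen1996hinfty_balancing} enters. I would also flag that the pointwise ordering $\sigma_1\geq\cdots\geq\sigma_n$ is a genuine hypothesis built into the transformation of Theorem~\ref{thm:inputnormal}, not something automatic; absent it one obtains only $\Vert\Sigma\Vert_H=\max_i\sup_{\bar{z}_i}\sigma_i(\bar{z}_i)$.
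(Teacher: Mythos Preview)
Your proposal is correct and matches the paper's approach: the paper does not give a self-contained proof (the theorem is cited from \cite{fujimoto2010balanced}), but the construction it provides immediately after the statement is precisely your componentwise scaling $\bar{z}_i = z_i\sqrt{\xi_i(z_i)}$ composed with the input-normal/output-diagonal transformation of Theorem~\ref{thm:inputnormal}, together with the definition $\sigma_i(\bar{z}_i):=\xi_i(\bar{\Phi}_i(\bar{z}_i))$. You supply additional detail the paper omits---the inverse function theorem argument for local invertibility of $\beta_i$, the explicit Jacobian of $\bar{\Phi}$ at the origin, and the full Hankel-norm computation via the ratio $\cE_o/\cE_c$---and you correctly flag that the pointwise ordering $\sigma_1\geq\cdots\geq\sigma_n$ is an extra hypothesis rather than automatic (indeed the paper later observes numerically that singular value functions can cross).
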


\vspace{0.2cm}

Such a variable transformation also exits under more technical assumptions for closed-loop system, see \cite[Thm 5.13]{scherpen1996hinfty_balancing}, where the past and future energy functions $\cE_\gamma^-$ and $\cE_\gamma^+$ are transformed into input-output form, and where the underlying system is potentially unstable. 
Note that since each singular value function in \eqref{eq:energyfct_fullybalanced} is associated with a single state component $z_i$, they can be used to decide the truncation of the states, see Section~\ref{sec:ROMs}. 
To obtain the transformation $\bar{\Phi}(\bar{\z})$, first consider  the past energy function in input-normal form, i.e., $\cE_\gamma^-(\Phi(\z)) = \frac{1}{2} \sum_{i=1}^n z_i^2$ from \eqref{eq:balRealCont}. We now need to \emph{scale} the individual states to obtain $\cE_\gamma^-(\bar{\Phi}(\bar{\z}))  = \frac{1}{2} \sum_{i=1}^n \frac{\bar{z}_i^2}{{\sigma}_i(\bar{z}_i)}$. To do this, consider the nonlinear transformation
\begin{equation} \label{eq:fulltrafo}
    \bar{z}_i = z_i \sqrt{\xi_i(z_i)} = \bar{\Phi}_i^{-1}(z_i) \quad \Rightarrow \quad \bar{\Phi}_i(\bar{z}_i) = z_i .
\end{equation}
We insert this transformation into the energy function in \eqref{eq:balRealCont} and obtain
\begin{align}
	\cE_\gamma^-(\bar{\Phi}(\bar{\z})) 
	&  = \frac{1}{2} \sum_{i=1}^n \frac{\bar{z}_i^2}{\xi_i(z_i)} \nonumber \\
	&= \frac{1}{2} \sum_{i=1}^n \frac{\bar{z}_i^2}{\xi_i(\bar{\Phi}_i(\bar{z}_i))} = \frac{1}{2} \sum_{i=1}^n \frac{\bar{z}_i^2}{\sigma_i(\bar{z}_i)},
\end{align}
where the new input-output singular value functions are defined (see \cite[Thm 11]{fujimoto2010balanced}) as 
\begin{equation} \label{eq:fullybalSVs}
    \sigma_i(\bar{z}_i):= \xi_i(\bar{\Phi}_i(\bar{z}_i)) = \xi_i(z_i). 
\end{equation}
This implies that the singular value functions of the input-normal/output-diagonal and the input-output balancing transformations are identical. 
To clarify this point, let us recall the linear case. For an LTI system, let  $\Y_\infty$ and $\X_\infty$ be the solutions to the 
the $\mathcal{H}_\infty$ AREs \eqref{eq:HinftyRiccati1} and \eqref{eq:HinftyRiccati2}, respectively. Then, the input normal form would imply that $\Y_\infty = \I$ and $\X_\infty = \diag(\xi_1(0)^2,\ldots,\xi_n(0)^2)$.
However, since the $\cH_\infty$-characteristic values $\xi_i(0)$, which are the Hankel singular values in the limit $\gamma\rightarrow 1$ in \eqref{eq:HinftyenergyFunctions1}--\eqref{eq:HinftyenergyFunctions2}, are invariant under state-space transformation, in the fully balanced coordinates (after proper scaling) one would have
$\Y_\infty = \X_\infty = \diag(\xi_1(0),\ldots,\xi_n(0))$. 

Next, we consider how the nonlinear state transformation~\eqref{eq:fulltrafo} affects the future energy function of the input-normal/output-diagonal form, $\cE_\gamma^+(\Phi(\z)) = \frac{1}{2} \sum_{i=1}^n z_i^2 \xi_i^2(z_i)$.
Applying the nonlinear transformation from \eqref{eq:fulltrafo}, it follows that the future energy function is automatically transformed into the input-output balanced form as in Theorem~\ref{thm:fullybalanced}:
\begin{align}
	\cE_\gamma^+(\bar{\Phi}(\bar{\z})) & = \frac{1}{2} \sum_{i=1}^n \bar{z}_i^2 \xi_i(z_i) = \frac{1}{2} \sum_{i=1}^n \bar{z}_i^2 \xi_i(\bar{\Phi}(\bar{z}_i)) \nonumber \\
	& = \frac{1}{2} \sum_{i=1}^n \bar{z}_i^2 \sigma_i(\bar{z}_i),
\end{align}
with the the singular value functions from \eqref{eq:fullybalSVs}.

\begin{remark}
The connection to balanced truncation for LTI systems can be further appreciated by writing the energy functions in~\eqref{eq:energyfct_fullybalanced} in the form
\begin{align}
\cE_\gamma^-(\bar{\Phi}(\bar{\z})) & = \frac{1}{2}\bar{\z}^\top \bSigma(\bar{\z})^{-1} \bar{\z}, \\
\quad 
\cE_\gamma^+(\bar{\Phi}(\bar{\z})) & = \frac{1}{2} \bar{\z}^\top \bSigma(\bar{\z}) \bar{\z}
\end{align}
where $\bSigma(\bar{\z})= {\rm{diag}}({\sigma}_1(\bar{z}_1), \ldots, {\sigma}_n(\bar{z}_n))$. The interpretation of this form is to consider the `Gramian' in the balanced coordinates$, \bSigma(\bar{\z})$, as state dependent, as opposed to it being constant in the LTI case. 
\end{remark}

\subsection{Balanced high-dimensional model} \label{sec:balancedFOM}
The nonlinear transformation from Theorem~\ref{thm:fullybalanced} and \eqref{eq:fulltrafo} that brings the dynamical system~\eqref{eq:FOMNL1}--\eqref{eq:FOMNL2} into the input-output balanced coordinate system can be summarized as
\begin{equation} \label{eq:scaledStates}
\begin{cases} 
\x & = \bar{\Phi}(\bar{\z}) = \T_1 \z + \T_{2} \kronF{\z}{2} + \cdots + \T_{k} \kronF{\z}{k} \\
      z_i & = \bar{z}_i /\sqrt{\sigma_i(\bar{z}_i)} .
\end{cases}
\end{equation}
Note that the transformation matrices $\T_i$ did not change, as the input-normal/output-diagonal form is already diagonalized, but the scaling has. In other words, using \eqref{eq:fullybalSVs} each new state is now scaled by $\sqrt{\xi_i(z_i)}$ to get the fully balanced form from Theorem~\ref{thm:fullybalanced}. 
In the LTI case, $\x= \T_1\z$ from \eqref{eq:trafo} yields the input-normal/output-diagonal and by an additional scaling of the state, $\bar{z}_i/\sqrt{\xi_i(0)}=z_i$ we obtain the transformation $\x= \T_1{\z}= \T_1\bSigma^{-1/2}\bar{\z}$ which is the input-output balanced form in \eqref{eq:scaledStates}.

The dynamical system when transformed with the input-output balancing transformation $\x = \bar{\Phi}(\bar{\z})$ is 
\begin{equation} \label{eq:balFOM}
      \bar{\J}(\bar{\z}) \dot{\bar{\z}} = \f (\bar{\Phi}(\bar{\z})) + \g(\bar{\Phi}(\bar{\z})) \u,
\end{equation}
where the Jacobian $\bar{\J}(\bar{\z})\in \Real^{n\times n}$ of the state-space transformation is given by
\begin{align} \label{eq:Jacobian}
    \bar{\J}(\bar{\z})  
     := & \frac{\text{d}{\bar{\Phi}(\bar{\z})}}{\text{d}\bar{\z}} \\
    = & \T_1 + 2\T_2(\bar{\z}\otimes \I) + 3\T_3 (\bar{\z}\otimes \bar{\z}\otimes \I) + \ldots, \nonumber
\end{align}
where we used the fact that since we compute $\T_k$ with symmetric coefficients, it follows that, e.g., $\T_2(\bar{\z}\otimes\I) = \T_2(\I\otimes\bar{\z})$. See Definition~\ref{def:sym} for more details on symmetric coefficients.
The Jacobian can be computed explicitly without numerical approximation, as it is the derivative of a polynomial transformation. Remarkably, the Jacobian $\frac{\text{d}{\Phi(\z})}{\text{d}\z}$ of the input-normal/output-diagonal transformation \eqref{eq:trafo} and the Jacobian $\frac{\text{d}{\bar{\Phi}(\bar{\z})}}{\text{d}\bar{\z}}$ of the input-output balancing transformation \eqref{eq:scaledStates} have the same coefficient matrices $\T_i$, which significantly simplifies the ROM simulation.
In the next section we revisit the standard balancing transformation that typically results in conditioning problems.  We then introduce a novel approximation to simultaneously balance-and-reduce nonlinear systems in a well-conditioned and computationally efficient way.

\section{Balanced truncation model reduction via nonlinear transformations} \label{sec:ROMs}
In this section, we reduce the dimensionality of the fully balanced model
\eqref{eq:balFOM}.  To determine the reduced dimension $r$ of the ROM, we look for a significant gap in the $\cH_\infty$ singular value functions, i.e., we look for the reduced dimension $r$ such that
\begin{equation}
    \max_{\bar{z}_r} \sigma_r(\bar{z}_r) \gg \max_{\bar{z}_{r+1}} \sigma_{r+1}(\bar{z}_{r+1})
\end{equation}
(at a minimum we require that `$>$' holds) in a neighborhood of the origin. This indicates that the state components $\bar{z}_1, \bar{z}_2, \ldots, \bar{z}_r$ are more important in terms of the past and future energy functions $\cE_\gamma^-$ and $\cE_\gamma^+$ than the states $\bar{z}_{r+1}, \bar{z}_{r+2}, \ldots, \bar{z}_n$.  We therefore set $\bar{z}_{r+1} = \bar{z}_{r+2} = \ldots = \bar{z}_n = 0$ in the balanced coordinates and define the reduced state vector as 
\begin{equation} \label{eq:reducedStates}
    \bar{\z}_r = \bPsi_r^\top  \ \bar{\z} =  [\bar{z}_1, \bar{z}_2, \ldots \bar{z}_r]^\top, \quad \bPsi_r = \begin{bsmallmatrix} \I_r & \bzero \end{bsmallmatrix}^\top \in \Real^{n\times r}.
\end{equation}
The next Section~\ref{sec:balthenreduce} presents the originally-proposed balanced ROM from \cite{scherpen1993balancing,scherpen1996hinfty_balancing}. Section~\ref{sec:simultaneiousBalancingROM} proposes a novel and numerically more efficient and better conditioned approximate strategy to compute the nonlinear ROMs corresponding to this truncation strategy. Section~\ref{sec:NLmanifolds} suggests a different perspective of the nonlinear ROM, namely the approximation on a nonlinear balanced manifold.

\subsection{Balance-then-reduce approach} \label{sec:balthenreduce}
The balance-\textit{then}-reduce strategy suggested in~\cite{scherpen1993balancing,scherpen1996hinfty_balancing} first computes the full balancing transformation, and then truncates the resulting fully balanced system. Applying this to equation~\eqref{eq:balFOM} yields
\begin{align} 
      \dot{\bar{\z}}_r  = & \underbrace{\bPsi_r^\top [\bar{\J}([\bar{\z}_r,\bzero])]^{-1}  \f (\bar{\Phi}([\bar{\z}_r,\bzero]))}_{=:\f_r(\bar{\z}_r)} \nonumber \\
      & +  \underbrace{\bPsi_r^\top [\bar{\J}([\bar{\z}_r,\bzero])]^{-1} \g(\bar{\Phi}([\bar{\z}_r,\bzero]))}_{=:\g_r(\bar{\z}_r)} \u, \label{eq:balROM1} \\
      \y_r  = & \underbrace{\h ( \bar{\Phi}([\bar{\z}_r, \bzero]))}_{=:\h_r(\bar{\z}_r)}. \label{eq:balROM2}
\end{align}
The high-dimensional state is reconstructed as $\x \approx \bar{\Phi}([\bar{\z}_r, \bzero])$. 
A goal of balanced truncation  is to obtain ROMs that are balanced in the reduced coordinates and that retain properties of the FOM, such as stability. The following theorems show that obtaining such results depends on which energy functions are used for balancing. First, we consider the case of balancing the open-loop energy functions from~\eqref{eq:limitto1}.
\begin{theorem} \cite[Thm. 10]{fujimoto2010balanced}  \label{thm:prop1}
Consider the nonlinear dynamical system~\eqref{eq:FOMNL1}--\eqref{eq:FOMNL2}. 
Suppose that 
\begin{enumerate}
\item the open-loop controllability and observability energy functions $\cE_c$ and $\cE_o$ exist,
\item the matrices $\left [\frac{\partial^2 \cE_c(\x)}{\partial x_i \partial x_j}(\bzero)\right ]_{i,j =1, \ldots n}$ and $\left [\frac{\partial^2 \cE_o(\x)}{\partial x_i \partial x_j}(\bzero)\right ]_{i,j =1, \ldots n}$ are positive definite, 
\item the eigenvalues of $\left [\frac{\partial^2 \cE_c(\x)}{\partial x_i \partial x_j}(\bzero)\right ]^{-1} \left[\frac{\partial^2 \cE_o(\x)}{\partial x_i \partial x_j}(\bzero)\right ]$ are distinct. 
\end{enumerate}
Then, with the input-output balancing transformation $\bar{\Phi}(\bar{\z})$, the controllability function $\cE_{c,r}$ and observability energy function $\cE_{o,r}$ of the balanced ROM~\eqref{eq:balROM1}--\eqref{eq:balROM2} satisfy
\begin{align}
    \cE_{c,r}(\bar{\z}_r) = \cE_c([\bar{\z}_r,\bzero]), \quad \cE_{o,r}(\bar{\z}_r) = \cE_o([\bar{\z}_r,\bzero]).
\end{align}
Moreover, the singular value functions $\sigma_{i,r}(\bar{z}_i)$ of the ROM similarly satisfy
\begin{align}
    \sigma_{i,r}(\bar{z}_i) = \sigma_i(\bar{z}_i),
\end{align}
and the energy functions of the ROM are balanced in the sense of Theorem~\ref{thm:fullybalanced}.
\end{theorem}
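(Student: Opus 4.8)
\quad The plan is to work entirely in the input-output balanced coordinates $\bar{\z}$, exploit that by Theorem~\ref{thm:fullybalanced} the energy functions are \emph{separable} there, namely $\cE_c(\bar{\Phi}(\bar{\z})) = \tfrac12\sum_{i=1}^n \bar{z}_i^2/\sigma_i(\bar{z}_i)$ and $\cE_o(\bar{\Phi}(\bar{\z})) = \tfrac12\sum_{i=1}^n \sigma_i(\bar{z}_i)\bar{z}_i^2$, and then use the Hamilton--Jacobi (HJ) characterization of the energy functions together with a uniqueness argument. Abbreviate the balanced full-order model (FOM) as $\dot{\bar{\z}} = \bar{\f}(\bar{\z}) + \bar{\g}(\bar{\z})\u$, $\y = \bar{\h}(\bar{\z})$, with $\bar{\f}(\bar{\z}) := [\bar{\J}(\bar{\z})]^{-1}\f(\bar{\Phi}(\bar{\z}))$, $\bar{\g}(\bar{\z}) := [\bar{\J}(\bar{\z})]^{-1}\g(\bar{\Phi}(\bar{\z}))$, $\bar{\h}(\bar{\z}) := \h(\bar{\Phi}(\bar{\z}))$, so that the balance-then-reduce ROM \eqref{eq:balROM1}--\eqref{eq:balROM2} reads $\f_r(\bar{\z}_r) = \bPsi_r^\top \bar{\f}(\bPsi_r\bar{\z}_r)$, $\g_r(\bar{\z}_r) = \bPsi_r^\top \bar{\g}(\bPsi_r\bar{\z}_r)$, $\h_r(\bar{\z}_r) = \bar{\h}(\bPsi_r\bar{\z}_r)$ (in particular $\f_r(\bzero) = \bzero$). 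Since energy functions compose with changes of coordinates, $\cE_c(\bar{\Phi}(\cdot))$ and $\cE_o(\bar{\Phi}(\cdot))$ are the energy functions of $(\bar{\f},\bar{\g},\bar{\h})$ and hence solve its HJ equations, $\partial_{\bar{\z}}\cE_o(\bar{\Phi}(\bar{\z}))\,\bar{\f}(\bar{\z}) + \tfrac12\|\bar{\h}(\bar{\z})\|^2 = 0$ and $\partial_{\bar{\z}}\cE_c(\bar{\Phi}(\bar{\z}))\,\bar{\f}(\bar{\z}) + \tfrac12\,\partial_{\bar{\z}}\cE_c(\bar{\Phi}(\bar{\z}))\,\bar{\g}\bar{\g}^\top\big(\partial_{\bar{\z}}\cE_c(\bar{\Phi}(\bar{\z}))\big)^\top = 0$.

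The crux of the argument is the observation that each summand in the separable energy functions depends on one coordinate and vanishes to second order at $\bar{z}_i = 0$, so $\partial_{\bar{z}_j}\cE_c(\bar{\Phi}(\bar{\z}))$ and $\partial_{\bar{z}_j}\cE_o(\bar{\Phi}(\bar{\z}))$ vanish at every point whose $j$-th coordinate is zero. Consequently, evaluated at $\bPsi_r\bar{\z}_r$ (i.e.\ $\bar{z}_{r+1} = \cdots = \bar{z}_n = 0$) the gradients of both energy functions are supported on the leading $r$ coordinates, so that inserting the projector $\bPsi_r\bPsi_r^\top$ next to them changes nothing. I would then substitute $\bar{\z} = \bPsi_r\bar{\z}_r$ into the two HJ equations above, insert $\bPsi_r\bPsi_r^\top$ as needed (twice in the quadratic controllability term, once in the observability term), and recognize $\bPsi_r^\top\bar{\f}(\bPsi_r\cdot) = \f_r$, $\bPsi_r^\top\bar{\g}(\bPsi_r\cdot) = \g_r$, $\bar{\h}(\bPsi_r\cdot) = \h_r$. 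This shows that the restricted functions $\cE_{o,r}^{\mathrm{restr}}(\bar{\z}_r) := \cE_o(\bar{\Phi}(\bPsi_r\bar{\z}_r))$ and $\cE_{c,r}^{\mathrm{restr}}(\bar{\z}_r) := \cE_c(\bar{\Phi}(\bPsi_r\bar{\z}_r))$ solve, respectively, the observability and controllability HJ equations of the ROM $(\f_r,\g_r,\h_r)$, and both vanish at the origin.

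To finish, I would invoke uniqueness: under hypotheses (1)--(3) the ROM's energy functions are the unique smooth solutions of these HJ equations that are positive definite near the origin, hence they must equal $\cE_{c,r}^{\mathrm{restr}}$ and $\cE_{o,r}^{\mathrm{restr}}$, whose Hessians at the origin are the leading $r\times r$ blocks of $\mathrm{diag}(1/\sigma_i(0))$ and $\mathrm{diag}(\sigma_i(0))$ and are positive definite because hypothesis (2) forces $\sigma_i(0) > 0$. This gives $\cE_{c,r}(\bar{\z}_r) = \cE_c([\bar{\z}_r,\bzero])$ and $\cE_{o,r}(\bar{\z}_r) = \cE_o([\bar{\z}_r,\bzero])$. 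Reading these identities off the separable form yields $\cE_{c,r}(\bar{\z}_r) = \tfrac12\sum_{i=1}^r \bar{z}_i^2/\sigma_i(\bar{z}_i)$ and $\cE_{o,r}(\bar{\z}_r) = \tfrac12\sum_{i=1}^r \sigma_i(\bar{z}_i)\bar{z}_i^2$, which is exactly the input-output balanced form of Theorem~\ref{thm:fullybalanced} (with $n$ replaced by $r$), so the ROM is balanced and its singular value functions satisfy $\sigma_{i,r}(\bar{z}_i) = \sigma_i(\bar{z}_i)$ for $i = 1,\dots,r$.

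The hardest part is the uniqueness step, which presupposes that the ROM $(\f_r,\g_r,\h_r)$ itself meets the hypotheses under which the HJ equations pin down the energy functions, i.e.\ that its Jacobian linearization at the origin is controllable, observable, and asymptotically stable. Controllability and observability of that linearization follow from positive definiteness of the restricted Hessians in hypothesis (2); asymptotic stability of the truncated linearization is the nonlinear counterpart of the Pernebo--Silverman stability result for LTI balanced truncation, and this is where hypothesis (3) (distinct eigenvalues, hence a strict gap between the truncated characteristic values) is essential. I expect to dispatch it by linearizing $\bar{\f}$, $\bar{\g}$, $\bar{\h}$ at the origin and applying the classical LTI balanced-truncation theorem to the resulting leading $r\times r$ block.
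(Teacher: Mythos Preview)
The paper does not supply its own proof of this theorem: it is quoted verbatim from \cite[Thm.~10]{fujimoto2010balanced} and only followed by a remark on stability implications. There is therefore nothing in the present paper to compare your argument against.

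That said, your plan is essentially the standard argument and is sound. The key structural observation---that in input-output balanced coordinates the energy functions are separable, so their gradients evaluated on the subspace $\{\bar{z}_{r+1}=\cdots=\bar{z}_n=0\}$ are supported on the first $r$ entries, allowing you to insert $\bPsi_r\bPsi_r^\top$ freely---is exactly the mechanism that makes the restricted functions satisfy the reduced HJ equations. Your identification of the uniqueness step as the delicate point is also accurate: one needs the truncated linearization to be asymptotically stable, controllable, and observable, and the distinct-eigenvalue hypothesis~(3) is precisely what lets you invoke the Pernebo--Silverman LTI result at the linearized level. One small caution: the uniqueness of smooth, locally positive-definite solutions to the controllability HJ equation is slightly more subtle than for the observability equation (the former involves the unstable dynamics $-\bar{\f}$ and requires the stabilizing solution), so when you write it up make sure you invoke the correct local existence/uniqueness statement rather than bare HJ uniqueness.
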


\begin{remark}
    From Theorem~\ref{thm:prop1} we see that under suitable assumptions, the open-loop energy functions are preserved at the ROM level. This implies that the ROM inherits the local asymptotic stability of the FOM~\cite[Thm 5.3]{scherpen1993balancing}. In some special cases global asymptotic stability can be guaranteed, see~\cite[Thm 5.4]{scherpen1993balancing}.
pp\end{remark}

The next theorem addresses the case when balancing is performed with the closed-loop $\cH_\infty$ (past and future) energy functions~\eqref{eq:HinftyenergyFunctions1}--\eqref{eq:HinftyenergyFunctions3}. In this scenario, an extra condition is needed so that the ROMs remain balanced in the reduced coordinates as well. 
\begin{theorem} \cite[Thm 6.1.]{scherpen1996hinfty_balancing}
    Let $\check{\z} = [\bar{z}_{r+1}, \ldots, \bar{z}_{n}]$ be the vector of state components that are truncated from the FOM. Correspondingly, let the input-output balanced system \eqref{eq:balFOM} be partitioned as $[\dot{ \bar{\z}}_r; \dot{\check{\z}}]^\top = [\f_1(\bar\z); \f_2(\bar\z)]^\top + [\g_1(\bar\z); \g_2(\bar\z)]^\top \u$. 
    Suppose that the closed-loop energy functions $\cE_\gamma^-$ and $\cE_\gamma^+$ from ~\eqref{eq:HinftyenergyFunctions1}--\eqref{eq:HinftyenergyFunctions3} exist.
    With the input-output balancing transformation $\bar{\Phi}(\bar{\z})$, the past energy function of the balanced ROM~\eqref{eq:balROM1}--\eqref{eq:balROM2} satisfies
    \begin{align}
        \cE_\gamma^-(\bar{\z}_r) = \cE_\gamma^-([\bar{\z}_r,\bzero]).
    \end{align}
    Assume further that
    \begin{align}
    \frac{\partial \cE_\gamma^+}{\partial \check{\z}}(\bar{\z}_r,\bzero)\f_2(\bar{\z}_r,\bzero)=0, \quad
    \frac{\partial \cE_\gamma^+}{\partial \check{\z}}(\bar{\z}_r,\bzero)\g_2(\bar{\z}_r,\bzero)=0
    \end{align}
    holds; then the future energy function satisfies
    \begin{align}
        \qquad \cE_\gamma^+(\bar{\z}_r) = \cE_\gamma^+([\bar{\z}_r,\bzero]).
    \end{align}
\end{theorem}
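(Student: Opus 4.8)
My plan is to carry out the whole argument in the input--output balanced coordinates $\bar{\z}$, where the problem decouples. By Theorem~\ref{thm:fullybalanced} and~\eqref{eq:scaledStates}--\eqref{eq:balFOM}, in these coordinates the FOM reads $[\dot{\bar{\z}}_r;\dot{\check{\z}}] = [\f_1(\bar{\z});\f_2(\bar{\z})] + [\g_1(\bar{\z});\g_2(\bar{\z})]\u$, $\y = \h(\bar{\Phi}(\bar{\z}))$, with $[\f_1;\f_2] = \bar{\J}(\bar{\z})^{-1}\f(\bar{\Phi}(\bar{\z}))$ and $[\g_1;\g_2] = \bar{\J}(\bar{\z})^{-1}\g(\bar{\Phi}(\bar{\z}))$, while the energy functions take the decoupled forms $\cE_\gamma^-(\bar{\Phi}(\bar{\z})) = \frac12\sum_i \bar{z}_i^2/\sigma_i(\bar{z}_i)$ and $\cE_\gamma^+(\bar{\Phi}(\bar{\z})) = \frac12\sum_i \sigma_i(\bar{z}_i)\bar{z}_i^2$. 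In these coordinates the balanced ROM~\eqref{eq:balROM1}--\eqref{eq:balROM2} is \emph{exactly} the restriction of the FOM to the slice $\{\check{\z} = \bzero\}$ keeping the first $r$ equations, so $\f_r(\bar{\z}_r) = \f_1(\bar{\z}_r,\bzero)$, $\g_r(\bar{\z}_r) = \g_1(\bar{\z}_r,\bzero)$, $\h_r(\bar{\z}_r) = \h(\bar{\Phi}([\bar{\z}_r,\bzero]))$. First I would recall that $\cE_\gamma^-$ and $\cE_\gamma^+$ are characterized near the origin as the unique smooth solutions that vanish at the origin and have the appropriate stabilizing/sign-definite type of the Hamilton--Jacobi PDEs from Part~1, \cite{KGB_NonlinearBT_Part1}; a change of coordinates turns these into the same PDEs with data $(\bar{\J}^{-1}\f\circ\bar{\Phi},\,\bar{\J}^{-1}\g\circ\bar{\Phi},\,\h\circ\bar{\Phi})$, and the identical characterization applies to the ROM with data $(\f_r,\g_r,\h_r)$, since truncating a balanced, controllable, observable, asymptotically stable linearization yields another such system (the classical linear fact, the standing assumptions being those of Theorem~\ref{thm:fullybalanced}). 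The entire proof then reduces to checking that the FOM energy, restricted to the slice, solves the ROM's Hamilton--Jacobi PDE.

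\emph{Past energy.} I would take $V_r(\bar{\z}_r) := \cE_\gamma^-(\bar{\Phi}([\bar{\z}_r,\bzero]))$ as candidate, partition the gradient $\frac{\partial \cE_\gamma^-}{\partial\bar{\z}} = \big[\,\frac{\partial \cE_\gamma^-}{\partial\bar{\z}_r},\ \frac{\partial \cE_\gamma^-}{\partial\check{\z}}\,\big]$ and the drift and input maps conformally, and substitute into the FOM past-energy PDE evaluated at $(\bar{\z}_r,\bzero)$. The decisive observation is that, from the decoupled form above, $\frac{\partial \cE_\gamma^-}{\partial\check{\z}}$ \emph{vanishes identically on the slice}: each summand $\bar{z}_j^2/\sigma_j(\bar{z}_j)$ has zero $\bar{z}_j$-derivative at $\bar{z}_j=0$ because $\sigma_j(0)=\xi_j(0)>0$. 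Hence every term carrying $\frac{\partial \cE_\gamma^-}{\partial\check{\z}}$ drops out, and with $\frac{\partial \cE_\gamma^-}{\partial\bar{\z}_r}(\bar{\z}_r,\bzero) = \frac{\partial V_r}{\partial\bar{\z}_r}(\bar{\z}_r)$ and the ROM-data identification above, what remains is precisely the ROM past-energy Hamilton--Jacobi PDE with unknown $V_r$. Since $V_r(\bzero)=0$ and $V_r$ inherits the stabilizing structure, uniqueness gives $\cE_\gamma^-(\bar{\z}_r) = V_r(\bar{\z}_r) = \cE_\gamma^-([\bar{\z}_r,\bzero])$; no extra hypothesis is needed, consistent with the unconditional form of this part of the statement.

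\emph{Future energy.} I would repeat the argument with $W_r(\bar{\z}_r) := \cE_\gamma^+(\bar{\Phi}([\bar{\z}_r,\bzero]))$ and the future-energy PDE, whose Hamiltonian is quadratic in the gradient through a $\g\g^\top$-type term. Now, on restriction to the slice, the drift part leaves behind the cross-term $\frac{\partial \cE_\gamma^+}{\partial\check{\z}}(\bar{\z}_r,\bzero)\,\f_2(\bar{\z}_r,\bzero)$ and the quadratic part leaves terms proportional to $\frac{\partial \cE_\gamma^+}{\partial\bar{\z}_r}\g_1\g_2^\top\!\left(\frac{\partial \cE_\gamma^+}{\partial\check{\z}}\right)^{\!\top}$ and $\Vert\frac{\partial \cE_\gamma^+}{\partial\check{\z}}\g_2\Vert^2$, all at $(\bar{\z}_r,\bzero)$. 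These are exactly the quantities annihilated by the stated hypotheses $\frac{\partial \cE_\gamma^+}{\partial\check{\z}}(\bar{\z}_r,\bzero)\f_2(\bar{\z}_r,\bzero)=0$ and $\frac{\partial \cE_\gamma^+}{\partial\check{\z}}(\bar{\z}_r,\bzero)\g_2(\bar{\z}_r,\bzero)=0$ (the latter, being the vanishing of a row vector, kills both the cross-term and the squared-norm term). With these gone, the restricted PDE is exactly the ROM future-energy Hamilton--Jacobi PDE for $W_r$, and uniqueness yields $\cE_\gamma^+(\bar{\z}_r) = W_r(\bar{\z}_r) = \cE_\gamma^+([\bar{\z}_r,\bzero])$. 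I would add a remark that when $\cE_\gamma^+$ is exactly in the diagonalized input--output form above, $\frac{\partial \cE_\gamma^+}{\partial\check{\z}}$ itself already vanishes on the slice, so the hypotheses are automatic; they are stated separately to accommodate realizations (e.g.\ only input-normalized ones, as in the general closed-loop setting of \cite[Thm.~5.13]{scherpen1996hinfty_balancing}) in which the future energy still couples retained and truncated coordinates.

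\emph{Main obstacle.} The crux I anticipate is twofold. (i) Verifying that the restriction of the FOM Hamilton--Jacobi PDE to the slice is \emph{literally} the ROM's PDE requires careful bookkeeping of how the block partitions of the drift, the input map, the output, and the energy gradient combine inside both the drift term and the $\g\g^\top$-type term; it succeeds only because the $\check{\z}$-part of the energy gradient either vanishes (past energy) or is killed against $\f_2,\g_2$ (future energy), and because~\eqref{eq:balROM1}--\eqref{eq:balROM2} is the coordinate restriction of the FOM. (ii) The uniqueness step needs that the ROM PDE admits a unique smooth, origin-vanishing, stabilizing solution near the origin---which follows from stability, controllability, and observability of the truncated linearization---and that the restricted FOM energy \emph{is} that solution, i.e.\ inherits the stabilizing property (argued by restricting the FOM's optimal feedback to the slice). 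A final caveat worth flagging: in the Taylor-truncated balanced coordinates actually computed in this paper the decoupled forms of $\cE_\gamma^\pm$ hold only up to the truncation degree, so the gradient-vanishing (past energy) and the hypotheses (future energy) hold only approximately; the exact identities are those attributed to~\cite[Thm.~6.1]{scherpen1996hinfty_balancing}.
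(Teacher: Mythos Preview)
The paper does not prove this theorem. It is stated as a citation of \cite[Thm.~6.1]{scherpen1996hinfty_balancing} and no proof appears in the text; the paper simply records the result to contrast the closed-loop case with the open-loop result of Theorem~\ref{thm:prop1} and then moves on. So there is no ``paper's own proof'' against which to compare your proposal.

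That said, your strategy is the standard one and matches the argument in Scherpen's original paper: write the $\cH_\infty$ Hamilton--Jacobi PDEs in balanced coordinates, partition the gradient and the data conformally with $\bar{\z}=[\bar{\z}_r;\check{\z}]$, restrict to the slice $\check{\z}=\bzero$, and observe that the restricted equation coincides with the ROM's PDE once the $\check{\z}$-gradient terms drop out---automatically for the past energy by the diagonal form, and by hypothesis for the future energy. Your remark that in the fully input--output balanced realization of Theorem~\ref{thm:fullybalanced} the future-energy gradient also vanishes on the slice (so the extra hypotheses become vacuous there) is correct and worth keeping; the hypotheses in Scherpen's statement are phrased for the more general setting where only the past energy has been normalized. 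The only point to tighten is the uniqueness step: you should state explicitly which solution class (local, smooth, with the prescribed Hessian at the origin, i.e.\ the stabilizing ARE solution) is being invoked for the ROM PDE, since this is what singles out $V_r$ and $W_r$ among all solutions.
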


Since the original system  is not assumed stable in the $\cH_\infty$ framework, an interesting property to study is whether the reduce-\textit{then}-design strategy for the $\cH_\infty$ suboptimal control is guaranteed to produce a suboptimal control for the $\cH_\infty$-balanced ROM. For conditions when that holds true, we refer to \cite[Thm 6.3]{scherpen1996hinfty_balancing}.

The evaluation of the polynomials terms on the right-hand side of the ROM~\eqref{eq:balROM1} can be simplified for the special case $\f(\z) = \A\z + \N(\z\otimes\z)$, which yields
\begin{align}
    & \f (\bar{\Phi}([\bar{\z}_r,\bzero]))= \nonumber \sum_{i=1}^k [ \A \T_i \kronF{\I_{n\times r}}{i}] \kronF{\bar{\z}_r}{i} \nonumber \\
    & \qquad + \N \left (\sum_{i=1}^k [\T_i \kronF{\I_{n\times r}}{i}] \kronF{\bar{\z}_r}{i} \otimes \sum_{i=1}^k [\T_i \kronF{\I_{n\times r}}{i}] \kronF{\bar{\z}_r}{i}  \right ).
\end{align}
Nevertheless, simulating the ROM~\eqref{eq:balROM1} is computationally expensive and will likely result in inverting an ill-conditioned matrix, which is in analogy to the linear case. In the next section, we propose a 
better conditioned and computationally more efficient implementation of the nonlinear ROM.

\subsection{Simultaneous balancing and reduction} \label{sec:simultaneiousBalancingROM}
The computation of $\bPsi_r^\top [\bar{\J}([\bar{\z}_r,\bzero])]^{-1}$ in the balanced ROM \eqref{eq:balROM1}--\eqref{eq:balROM2} requires inverting the full Jacobian followed by truncation.  This has a computational and a numerical disadvantage. 
First, this strategy requires a high number of floating point operations to form the full $\T_j \in \Real^{n\times n^j}, \ j=2, \ldots, k$ as in Theorem~\ref{thm:polyCoeffsTrafo}.
Second, computing $\T_1$ requires inversions that are often ill-conditioned for large-scale systems (the main focus of this work) since it requires inverting all the $\cH_\infty$-characteristic values, including the smallest ones. In the linear case, the balance-\textit{then}-reduce strategy is well known to be ill-conditioned due to small Hankel singular values, see, e.g.,~\cite[Sec. 7.3]{antoulas05} and a remedy is to perform simultaneous model reduction and truncation.

We suggest a new computational framework for the nonlinear case following these ideas from the linear case.  Our goal is to 
compute the truncated versions of the linear transformations and higher-degree tensors $\{\T_i\}_{i=1}^k$  from Theorem~\ref{thm:polyCoeffsTrafo} directly without computing the full-order quantities.
We begin with deriving the form of the nonlinear ROM for the FOM~\eqref{eq:FOMNL1}--\eqref{eq:FOMNL2} when a general polynomial state transformation and simultaneous reduction is applied. In other words, we are approximating the FOM~\eqref{eq:FOMNL1}--\eqref{eq:FOMNL2} on a nonlinear (here: balanced) manifold such that $\x \approx \Phi_r(\bar{\z}_r)$, where the mapping $\Phi_r$ defines a manifold. The following result applies to \textit{any} polynomial state transformation.
\begin{proposition} \label{prop:redman}
Consider the nonlinear system~\eqref{eq:FOMNL1}--\eqref{eq:FOMNL2}. Let $\T_{k,r} \in \Real^{n\times r^k}$ be (truncated) transformation matrices with symmetric coefficients and $\bar{\z}_r\in \Real^r$ denote the reduced state. Define the embedding $\Phi_r: \Real^r \mapsto \Real^n$ via
\begin{equation}
    \x \approx \Phi_r(\bar{\z}_r) := \T_{1,r} \bar{\z}_r + \T_{2,r} \kronF{\bar{\z}_r}{2} + \cdots + \T_{k,r} \kronF{\bar{\z}_r}{k}, \label{eq:NlStateReduction}
\end{equation}
for which the reduced Jacobian can be computed explicitly via
\begin{align} \label{eq:newJr}
 \J_r(\bar{\z}_r)  :=&  \frac{{\rm{d}}\Phi_r(\bar{\z}_r)}{{\rm{d}}\bar{\z}_r} \nonumber \\
    =&  \T_{1,r} + 2\T_{2,r}(\bar{\z}_r\otimes \I) \nonumber \\
    & + 3 \T_{3,r} (\bar{\z}_r\otimes \bar{\z}_r\otimes \I) + \cdots \in \Real^{n\times r}.
\end{align}
Thus, the nonlinear ROM for $\bar{\z}_r\in\Real^r$ is
\begin{align} 
      \dot{\bar{\z}}_r 
      & = \underbrace{\J_r(\bar{\z}_r)^{\dagger} \f(\Phi_r(\bar{\z}_r))}_{=:\f_r(\bar{\z}_r)} + \underbrace{ \J_r(\bar{\z}_r)^{\dagger} \g(\Phi_r(\bar{\z}_r))}_{=:\g_r(\bar{\z}_r)} \u \label{eq:NLROM1} \\
      \y_r & = \underbrace{\h(\Phi_r(\bar{\z}_r))}_{=:\h_r(\bar{\z}_r)}. \label{eq:NLROM2}
\end{align}
\end{proposition}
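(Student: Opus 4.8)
The plan is to substitute the manifold ansatz $\x \approx \Phi_r(\bar{\z}_r)$ from \eqref{eq:NlStateReduction} into the FOM \eqref{eq:FOMNL1}--\eqref{eq:FOMNL2} and to close the resulting overdetermined system by a least-squares (equivalently, orthogonal) projection onto the tangent space of the manifold parametrized by $\Phi_r$. Three steps suffice: (i) establish the closed form \eqref{eq:newJr} for $\J_r = {\rm d}\Phi_r/{\rm d}\bar{\z}_r$; (ii) differentiate the ansatz in time and use the chain rule to rewrite \eqref{eq:FOMNL1} in the reduced coordinate $\bar{\z}_r$; (iii) solve the resulting $n$-by-$r$ system for $\dot{\bar{\z}}_r$ via the Moore--Penrose pseudoinverse. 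The output equation \eqref{eq:NLROM2} is then immediate by substituting the ansatz into \eqref{eq:FOMNL2}.

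For step (i) I would differentiate $\Phi_r$ block by block. Applying the Leibniz rule to the $k$-fold Kronecker product $\bar{\z}_r\otimes\cdots\otimes\bar{\z}_r$ produces $k$ summands, the $j$th of which carries the identity $\I\in\Real^{r\times r}$ in the $j$th Kronecker slot and $\bar{\z}_r$ in every other slot; that is, ${\rm d}(\T_{k,r}\kronF{\bar{\z}_r}{k})/{\rm d}\bar{\z}_r = \sum_{j=1}^{k}\T_{k,r}(\kronF{\bar{\z}_r}{j-1}\otimes\I\otimes\kronF{\bar{\z}_r}{k-j})$. Because $\T_{k,r}$ has symmetric coefficients (Definition~\ref{def:sym}), moving the identity block to the last slot leaves each term unchanged, so all $k$ summands equal $\T_{k,r}(\kronF{\bar{\z}_r}{k-1}\otimes\I)$ and the block derivative collapses to $k\,\T_{k,r}(\kronF{\bar{\z}_r}{k-1}\otimes\I)$. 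Summing over $k$ gives \eqref{eq:newJr}; this is exactly the argument already used for the full Jacobian in \eqref{eq:Jacobian}, now carried out with $r$ in place of $n$.

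For steps (ii)--(iii), differentiating $\x = \Phi_r(\bar{\z}_r)$ in time yields $\dot{\x} = \J_r(\bar{\z}_r)\dot{\bar{\z}}_r$, so \eqref{eq:FOMNL1} becomes $\J_r(\bar{\z}_r)\dot{\bar{\z}}_r = \f(\Phi_r(\bar{\z}_r)) + \g(\Phi_r(\bar{\z}_r))\u$. Since $\J_r(\bar{\z}_r)\in\Real^{n\times r}$ with $r<n$, the right-hand side need not lie in its column space; taking the least-squares solution of minimum norm gives $\dot{\bar{\z}}_r = \J_r(\bar{\z}_r)^{\dagger}\big(\f(\Phi_r(\bar{\z}_r)) + \g(\Phi_r(\bar{\z}_r))\u\big)$, which splits into the stated $\f_r$ and $\g_r$ by linearity in $\u$, establishing \eqref{eq:NLROM1}. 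Geometrically, $\J_r(\bar{\z}_r)\J_r(\bar{\z}_r)^{\dagger}$ is the orthogonal projector onto $\mathrm{range}(\J_r(\bar{\z}_r))$, i.e.\ onto the tangent space of the manifold $\Phi_r(\Real^r)$ at the point $\Phi_r(\bar{\z}_r)$, so the reduced vector field is precisely the tangential component of the FOM vector field along the manifold.

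I do not expect a genuine obstacle: the result is a bookkeeping consequence of the chain rule and the definition of the pseudoinverse. The only points deserving care are the symmetry reduction in step (i) (which is why Definition~\ref{def:sym} and the symmetrization step in Algorithm~\ref{algo:inputnormalTrafo} are needed for the clean Jacobian formula) and the well-posedness of $\J_r^{\dagger}$: since $\J_r(\bzero)=\T_{1,r}$ inherits full column rank from the nonsingularity of $\T_1$, one has $\J_r(\bar{\z}_r)^{\dagger} = (\J_r(\bar{\z}_r)^\top\J_r(\bar{\z}_r))^{-1}\J_r(\bar{\z}_r)^\top$ on a neighborhood of the origin, so the ROM is well-defined and as smooth as $\f,\g$ there. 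If a Petrov--Galerkin closure is preferred over least squares, the same derivation goes through verbatim with $\J_r^{\dagger}$ replaced by any left inverse; the pseudoinverse is simply the canonical choice.
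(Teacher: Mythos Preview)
Your proposal is correct and follows essentially the same approach as the paper: verify the Jacobian formula \eqref{eq:newJr} via the symmetry of the $\T_{k,r}$ (the paper simply points back to the argument for \eqref{eq:Jacobian}), substitute $\x=\Phi_r(\bar{\z}_r)$ into the FOM to obtain $\J_r(\bar{\z}_r)\dot{\bar{\z}}_r=\f(\Phi_r(\bar{\z}_r))+\g(\Phi_r(\bar{\z}_r))\u$, and left-multiply by the pseudoinverse. Your write-up is in fact more explicit than the paper's, spelling out the Leibniz-rule collapse, the tangent-space interpretation of $\J_r\J_r^{\dagger}$, and the full-column-rank argument for well-posedness near the origin; none of this is in the paper's terse proof, but it is all consistent with it.
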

\vspace{0.1cm}
where $[\cdot]^\dagger$ denotes the Moore-Penrose pseudoinverse.
\begin{proof}
Given $\Phi_r(\bar{\z}_r)$ in~\eqref{eq:NlStateReduction} and  using the symmetry arguments as in \eqref{eq:Jacobian} (which result from our symmetrized computation of the $\T_{i,r}$), we can directly verify the polynomial form of the Jacobian following \eqref{eq:Jacobian} with $\bar{\z}_r$ instead of $\bar{\z}$, thus obtaining~\eqref{eq:newJr}. 
With this reduced Jacobian defined, the model~\eqref{eq:balFOM} becomes
\begin{align} 
      \J_r(\bar{\z}_r) \dot{\z}_r  = \f(\Phi_r(\bar{\z}_r))+ \g(\Phi_r(\bar{\z}_r)) \u \in \Real^n.
\end{align}
We left-multiply the last equation with the pseudo-inverse of the Jacobian to get the ROM~\eqref{eq:NLROM1}--\eqref{eq:NLROM2}.
\end{proof}

The version of the ROM formulated in
Proposition~\ref{prop:redman} resolves the ill-conditioning issue since it no longer requires inverting the full Jacobian. Now, what is needed is a strategy that could compute the coefficient matrices, $\T_{i,r} \in \Real^{n\times r^i}$ of the nonlinear embedding $\Phi_r$ without needing to construct full coefficient matrices, $\T_{i}\in \Real^{n\times n^i}$. Following Theorem~\ref{thm:polyCoeffsTrafo} and Algorithm~\ref{algo:inputnormalTrafo} and motivated by the linear balanced truncation framework, in the next result we propose a new strategy to compute the matrices $\T_{i,r}$ of the nonlinear balancing transformation~\eqref{eq:NlStateReduction}.
Before stating this result, we point out that the ROM computation in~\eqref{eq:NLROM1} still requires evaluating the full $\f(\cdot)$ and $\g(\cdot)$ as nonlinear functions acting on vectors of dimension $n$. We revisit and resolve this issue---known as the lifting bottleneck---in Remark~\ref{rem:deim}.
\begin{proposition} \label{prop:TruncatedBT}
(\textit{Truncated (approximate) balanced transformation})
Let $\v_i,\w_i$ be the vectors of polynomial coefficients for the energy functions from \eqref{eq:pastenerexp}, \eqref{eq:wi_coeffs}. Let $\R, \L$ be their Cholesky factors, i.e., $\V_2 = \R \R^\top$ and $\W_2 = \L \L^\top$. Let $\L^\top \R^{-\top} = \mathcal{U} \bXi \mathcal{V}^\top$  be the singular value decomposition and define 
    $$
    \mathcal{U}_r = \mathcal{U}(:,1:r),~~\bXi_r = \bXi(1:r,1:r),~~\mathcal{V}_r = \mathcal{V}(:,1:r).
    $$ 
   {Then, the coefficient matrices of the nonlinear embedding $\Phi_r: \Real^r \mapsto \Real^n$  in~\eqref{eq:NlStateReduction} are} 
\begin{align}
    \T_{1,r} &= \R^{-\top} \mathcal{V}_r \in \Real^{n\times r},\\
    \T_{1,r}^{\dagger} & = \bXi_r^{-1} \mathcal{U}_r^\top \L^\top \in \Real^{r\times n}, \ \text{(left inverse)}\\
    \T_{2,r} & = -\frac{1}{2} \T_{1,r} \ {\normalfont \text{unvec}} \left (\kronF{[\T_{1,r}}{3}]^\top \v_3 \right )^\top \in \Real^{n\times r^2},
\end{align}
and more generally, for $2\leq k$,
{\small
\begin{subequations}
\begin{align}
    \T_{k,r} &= -\frac{1}{2} \T_{1,r} {\normalfont \text{unvec}} (\M_{k,r} )^\top \in \Real^{n\times r^k},~\mbox{where} \\
   \M_{k,r} & =  \sum_{\substack{i,j>1\\ i+j=k+1}}  {\normalfont \text{vec}} \left ( \T_{j,r}^\top \V_2 \T_{i,r} \right ) + \sum_{i=3}^{k+1}  \cT_{i,k+1}^\top \v_i.
\end{align}
\end{subequations}
}
\end{proposition}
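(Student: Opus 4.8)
The plan is to run the proof of Theorem~\ref{thm:polyCoeffsTrafo} on the reduced coordinate $\bar{\z}_r\in\Real^r$ in place of $\z\in\Real^n$: substitute the embedding $\Phi_r$ of \eqref{eq:NlStateReduction} into the past energy function \eqref{eq:NLen}, impose the reduced input-normal relation $\cE_\gamma^-(\Phi_r(\bar{\z}_r))=\tfrac12\bar{\z}_r^\top\bar{\z}_r$, and match Kronecker-polynomial coefficients degree by degree, as in \eqref{eq:phiMatch1}--\eqref{eq:phiMatchTerms1a}. Relative to Theorem~\ref{thm:polyCoeffsTrafo}, the only genuinely new issues are (i) producing the linear-level balancing relations for the \emph{rectangular} matrix $\T_{1,r}$ and its left inverse $\T_{1,r}^{\dagger}$, and (ii) arranging that $\T_{1,r}^{\dagger}$ takes over the role that $\T_1^{-1}$ plays in the full-order proof.

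For (i): positive definiteness of $\V_2,\W_2$ forces all $\cH_\infty$-characteristic values $\xi_i(0)>0$, so $\bXi_r$ is invertible and $\T_{1,r}^{\dagger}$ is well defined. Using $\L^\top\R^{-\top}=\mathcal{U}\bXi\mathcal{V}^\top$ together with $\mathcal{U}_r^\top\mathcal{U}=\begin{bsmallmatrix}\I_r & \bzero\end{bsmallmatrix}$ and $\mathcal{V}^\top\mathcal{V}_r=\begin{bsmallmatrix}\I_r & \bzero\end{bsmallmatrix}^\top$, one checks directly that $\T_{1,r}^{\dagger}\T_{1,r}=\bXi_r^{-1}\mathcal{U}_r^\top(\mathcal{U}\bXi\mathcal{V}^\top)\mathcal{V}_r=\I_r$ (so $\T_{1,r}^{\dagger}$ is a genuine left inverse, though not the Moore--Penrose pseudoinverse in general), that $\T_{1,r}^\top\V_2\T_{1,r}=\mathcal{V}_r^\top\mathcal{V}_r=\I_r$, and that $\T_{1,r}^\top\W_2\T_{1,r}=\bXi_r^2$; these are exactly the quadratic-term conditions obtained by matching degree-two terms in the reduced versions of \eqref{eq:phiMatch1} and \eqref{eq:phiMatch2}. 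For (ii), the single identity that drives everything is the reduced analogue of $\V_2\T_1=\T_1^{-\top}$, namely
\begin{equation*}
\V_2\T_{1,r}=\R\R^\top\R^{-\top}\mathcal{V}_r=\R\mathcal{V}_r=\L\mathcal{U}_r\bXi_r^{-1}=\T_{1,r}^{\dagger\top},
\end{equation*}
where the middle equality again uses the SVD relation $\R^{-1}\L=\mathcal{V}\bXi\mathcal{U}^\top$.

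With these in hand, matching the degree-$k$ terms of $\cE_\gamma^-(\Phi_r(\bar{\z}_r))=\tfrac12\bar{\z}_r^\top\bar{\z}_r$ reproduces the manipulations from the proof of Theorem~\ref{thm:polyCoeffsTrafo} verbatim, ending at the reduced counterpart of \eqref{eq:phiMatchTerms1a},
\begin{equation*}
-2\,\mathrm{vec}(\T_{k-1,r}^\top\V_2\T_{1,r})^\top=\sum_{\substack{i,j>1\\ i+j=k}}\mathrm{vec}(\T_{j,r}^\top\V_2\T_{i,r})^\top+\sum_{i=3}^{k}\v_i^\top\cT_{i,k,r},
\end{equation*}
where $\cT_{i,k,r}\in\Real^{n^i\times r^k}$ is the sum of $i$-fold tensor products built from $\{\T_{j,r}\}$ (dimensional bookkeeping forces this truncated reading of $\cT$ in \eqref{eq:Tkformula2}). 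Because $\T_{1,r}$ is rectangular this no longer pins down $\T_{k-1,r}$ uniquely, so we select the reduced embedding whose higher-degree coefficient matrices have columns in $\mathrm{col}(\T_{1,r})$, i.e. we write $\T_{k-1,r}=\T_{1,r}\Gamma_{k-1}$ with $\Gamma_{k-1}\in\Real^{r\times r^{k-1}}$. Then $\T_{k-1,r}^\top\V_2\T_{1,r}=\Gamma_{k-1}^\top\T_{1,r}^\top\T_{1,r}^{\dagger\top}=\Gamma_{k-1}^\top(\T_{1,r}^{\dagger}\T_{1,r})^\top=\Gamma_{k-1}^\top$, so $\Gamma_{k-1}$ is read off by the same reshaping used to pass from \eqref{eq:phiMatchTerms1a} to \eqref{eq:Tkformula2}; reindexing $k-1\to k$ and substituting $\T_{k,r}=\T_{1,r}\Gamma_k$ yields the stated recursion, with the $k=2$ case recovered since then the first sum in $\M_{k,r}$ is empty and $\cT_{3,3,r}=\kronF{\T_{1,r}}{3}$.

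I expect the crux to be precisely this non-uniqueness: once $r<n$ the reduced matching equations are underdetermined, so the truncated transformation is intrinsically the \emph{approximate} one obtained by constraining the embedding to $\mathrm{col}(\T_{1,r})$, rather than the exact restriction $\x=\bar{\Phi}([\bar{\z}_r,\bzero])$ of the balance-then-reduce map of Section~\ref{sec:balthenreduce}. Making this selection explicit---and observing that it is exactly what cures the ill-conditioning, since only the leading block $\bXi_r$ is ever inverted---is the heart of the argument; the reduced Jacobian formula \eqref{eq:newJr} and the ROM \eqref{eq:NLROM1}--\eqref{eq:NLROM2} then follow from Proposition~\ref{prop:redman} applied to this $\Phi_r$.
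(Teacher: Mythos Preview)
The paper does not actually prove Proposition~\ref{prop:TruncatedBT}: it is stated as a \emph{proposed construction} (note the qualifier ``approximate'' in the title), introduced with the sentence ``Following Theorem~\ref{thm:polyCoeffsTrafo} and Algorithm~\ref{algo:inputnormalTrafo} and motivated by the linear balanced truncation framework, in the next result we propose a new strategy\ldots''. No proof follows; the only verification the paper supplies is the subsequent proposition showing $\T_{1,r}^{\dagger}\V_2^{-1}\W_2\T_{1,r}=\bXi_r^2$.

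Your write-up therefore goes well beyond what the paper offers, and it is correct. The linear-level checks (i) are all fine, including the key identity $\V_2\T_{1,r}=\T_{1,r}^{\dagger\top}$, which is precisely the reduced analogue of $\V_2\T_1=\T_1^{-\top}$ used in the full-order proof. Your observation (ii) that the degree-$k$ matching equations become underdetermined once $\T_{1,r}$ is rectangular, and that the stated formula corresponds to the particular selection $\T_{k,r}\in\mathrm{col}(\T_{1,r})$, is exactly the right way to understand why the proposition is labelled ``approximate'': the $\T_{k,r}$ are not the restrictions $\T_k\kronF{\bPsi_r}{k}$ of the full tensors, but a computationally cheaper surrogate that only ever inverts $\bXi_r$. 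The paper leaves this implicit; you have made it explicit, which is a genuine addition.

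One minor point: you call $\T_{1,r}^{\dagger}$ ``not the Moore--Penrose pseudoinverse in general,'' and that is worth keeping---the paper uses the $\dagger$ notation loosely here (compare with Proposition~\ref{prop:redman}, where $\dagger$ does denote Moore--Penrose). Otherwise your argument is sound and more informative than the paper's treatment.
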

We highlight that due to the way we compute $\T_{i,r}$ above, the embedding, and hence Jacobian, is better conditioned and faster to evaluate than in the balance-\textit{then}-reduce strategy in Section~\ref{sec:balthenreduce}. The next theorem shows that the linear part of the transformation 
diagonalizes the product of the Gramians, as in the linear case. 
\begin{proposition}
{The approximate balancing transformation}~\eqref{eq:NlStateReduction} with $\T_{i,r}$ from Proposition~\ref{prop:TruncatedBT} satisfies
    \begin{equation}
        \bXi_r^2 = {\normalfont \text{diag}} (\xi_1^2(0), \ldots, \xi_r^2(0)) =  \T_{1,r}^{\dagger} \V_2^{-1} \W_2 \T_{1,r}.
    \end{equation}
\end{proposition}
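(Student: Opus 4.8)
The plan is to prove the identity by direct substitution of the closed-form expressions from Proposition~\ref{prop:TruncatedBT} and then collapsing everything using the singular value decomposition $\L^\top\R^{-\top} = \mathcal{U}\bXi\mathcal{V}^\top$ and its transpose $\R^{-1}\L = \mathcal{V}\bXi\mathcal{U}^\top$. First I would rewrite the middle factor via the Cholesky factorizations as $\V_2^{-1}\W_2 = \R^{-\top}\R^{-1}\L\L^\top$, and insert $\T_{1,r} = \R^{-\top}\mathcal{V}_r$ together with $\T_{1,r}^{\dagger} = \bXi_r^{-1}\mathcal{U}_r^\top\L^\top$. The product $\T_{1,r}^{\dagger}\V_2^{-1}\W_2\T_{1,r}$ then regroups into $\bXi_r^{-1}\mathcal{U}_r^\top(\L^\top\R^{-\top})(\R^{-1}\L)(\L^\top\R^{-\top})\mathcal{V}_r$, i.e., a product of copies of $\L^\top\R^{-\top}$ and its transpose sandwiched between the truncated orthogonal factors.

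Next I would substitute the SVD. Using $\mathcal{V}^\top\mathcal{V} = \I$ and $\mathcal{U}^\top\mathcal{U} = \I$ in the interior, the bracketed quantity $\L^\top(\V_2^{-1}\W_2)\R^{-\top}$ equals $(\mathcal{U}\bXi\mathcal{V}^\top)(\mathcal{V}\bXi\mathcal{U}^\top)(\mathcal{U}\bXi\mathcal{V}^\top) = \mathcal{U}\bXi^3\mathcal{V}^\top$. Then I would apply the selection identities $\mathcal{U}_r^\top\mathcal{U} = \begin{bmatrix}\I_r & \bzero\end{bmatrix}$ and $\mathcal{V}^\top\mathcal{V}_r = \begin{bmatrix}\I_r & \bzero\end{bmatrix}^\top$, which hold because $\mathcal{U}_r,\mathcal{V}_r$ are the leading $r$ columns of the orthogonal factors. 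This yields $\T_{1,r}^{\dagger}\V_2^{-1}\W_2\T_{1,r} = \bXi_r^{-1}\begin{bmatrix}\I_r & \bzero\end{bmatrix}\bXi^3\begin{bmatrix}\I_r & \bzero\end{bmatrix}^\top = \bXi_r^{-1}\bXi_r^3 = \bXi_r^2$. Finally, since $\bXi = \text{diag}(\xi_1(0),\ldots,\xi_n(0))$ from Theorem~\ref{thm:polyCoeffsTrafo} and its proof, the leading $r\times r$ block is $\bXi_r^2 = \text{diag}(\xi_1^2(0),\ldots,\xi_r^2(0))$, which closes the chain of equalities.

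Before the main computation I would record, for completeness, that $\T_{1,r}^{\dagger}$ as defined is a left inverse of $\T_{1,r}$ — the only property of the pseudoinverse actually used here — since $\T_{1,r}^{\dagger}\T_{1,r} = \bXi_r^{-1}\mathcal{U}_r^\top(\L^\top\R^{-\top})\mathcal{V}_r = \bXi_r^{-1}\bXi_r = \I_r$ by the same SVD-plus-orthogonality argument; this also confirms $\T_{1,r}$ has full column rank $r$ whenever $\bXi_r$ is nonsingular, i.e., the $r$ retained $\cH_\infty$-characteristic values are positive, which is exactly the assumption implicit in choosing the truncation level.

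I do not anticipate a genuine obstacle: the argument is entirely algebraic manipulation of triangular and orthogonal factors, and it is precisely the one-sided analogue of the step $\T_1^{-1}\V_2^{-1}\W_2\T_1 = \bXi^2$ in the proof of Theorem~\ref{thm:polyCoeffsTrafo}, with the full inverse replaced by a left inverse and every factor restricted to its leading $r$ columns. The only point requiring care is the dimension bookkeeping of the rectangular products $\mathcal{U}_r^\top\mathcal{U}$ and $\mathcal{V}^\top\mathcal{V}_r$, so that the interior $\bXi^3$ is truncated to exactly its top-left $r\times r$ block $\bXi_r^3$ and no off-diagonal block is accidentally retained; writing $\mathcal{U} = \begin{bmatrix}\mathcal{U}_r & \mathcal{U}_{n-r}\end{bmatrix}$ and likewise for $\mathcal{V}$ makes this immediate.
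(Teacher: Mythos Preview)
Your proposal is correct and follows essentially the same approach as the paper: direct substitution of the Cholesky and SVD factors, then collapsing via orthogonality to obtain $\bXi_r^{-1}\bXi_r^3 = \bXi_r^2$. If anything, your treatment of the rectangular selection $\mathcal{U}_r^\top\mathcal{U}$ and $\mathcal{V}^\top\mathcal{V}_r$ is more careful than the paper's own computation, which writes the intermediate step with truncated factors throughout.
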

\begin{proof}
Given the definitions of Proposition~\ref{prop:TruncatedBT} we directly compute
\begin{align*}
& \T_{1,r}^{\dagger} \V_2^{-1} \W_2 \T_{1,r} \\
& = \bXi_r^{-1} \mathcal{U}_r^\top \L^\top \R^{-\top} \R^{-1} \L \L^\top \R^{-\top} \mathcal{V}_r \\
& = \bXi_r^{-1} \mathcal{U}_r^\top \mathcal{U}_r \bXi_r \mathcal{V}_r^\top \mathcal{V}_r \bXi_r \mathcal{U}_r^\top \mathcal{U}_r \bXi_r \mathcal{V}_r^\top \mathcal{V}_r \\
& = \bXi_r^{-1} \bXi_r^3 = \bXi_r^2
\end{align*}
\end{proof}

With a numerically well-conditioned ROM~\eqref{eq:NLROM1}--\eqref{eq:NLROM2} and a computationally efficient way to compute the nonlinear balanced manifold, see Proposition~\ref{prop:TruncatedBT}, we summarize the steps to arrive at a simultaneously balanced-\underline{and}-reduced ROM in Algorithm~\ref{alg:fullBTROM}.

\begin{algorithm}[!htb]
	\caption{Computation of nonlinear input-output $\cH_\infty$-balanced ROM.}
	\label{alg:fullBTROM}
	\begin{algorithmic}[1]
		\Require Constant $\gamma>\gamma_0\geq0$, $\gamma \neq 1$; polynomial degrees $d> k>\ell$; reduced model order $r$
		\Ensure Input-output nonlinear $\cH_\infty$-balanced ROM~\eqref{eq:NLROM1}--\eqref{eq:NLROM2}.
	   \State Obtain a polynomial representation (or approximation) of the past and future energy functions $\cE_\gamma^-(\x)$ and $\cE_\gamma^+(\x)$, i.e.,  coefficients $\left \{ \v_i \right \}_{i=2}^d$ and $\left \{ \w_i \right \}_{i=2}^d$ in \eqref{eq:pastenerexp}, \eqref{eq:wi_coeffs} (e.g., through Part~1 of this paper\cite{KGB_NonlinearBT_Part1})
        \State Compute the truncated polynomial coefficient matrices $\left \{ \T_{i,r}\right \}_{i=1}^k$ for $\x \approx \Phi_r(\bar{\z}_r)$ from \eqref{eq:NlStateReduction} following Proposition~\ref{prop:TruncatedBT}. 
        \State Symmetrize the coefficients $\left \{ \T_{i,r}\right \}_{i=1}^r$ (see Remark~\ref{remark:kroneckerPoly})
        \State Assemble the nonlinear ROM functions $\f_r(\bar{\z}_r), \g_r(\bar{\z}_r), \h_r(\bar{\z}_r)$ as in ~\eqref{eq:NLROM1}--\eqref{eq:NLROM2} with the explicit Jacobian in \eqref{eq:newJr}.
	\end{algorithmic}
\end{algorithm}

Algorithm~\ref{alg:fullBTROM} requires a choice of ROM dimension $r$. This can be done by plotting all state-dependent singular value functions from \eqref{eq:expandSval}, whose coefficients are obtained via Algorithm~\ref{algo:inputnormalSvals}, and deciding on an ordering of the functions in magnitude. However, this procedure increases the (offline) cost of the algorithm, as all $\c_i$ and $\T_i$, for $i=1,2,\ldots,n$ are required. To reduce the offline cost, it may be enough in some cases to only consider the constant terms $\xi_i(0)$ in the singular value functions to make a truncation decision based on their decay, as is done in balanced truncation for LTI systems.

\begin{remark} \label{rem:deim}
Algorithm~\ref{alg:fullBTROM} helps to resolve the computational complexity and numerical ill-conditioning issues of the balance-then-reduce strategy using the simultaneous (approximate) balancing-\underline{and}-reduce strategy. 
However, as stated earlier, we point out that the ROM computation in~\eqref{eq:NLROM1}-\eqref{eq:NLROM2} still requires evaluating the full $\f(\cdot)$ and $\g(\cdot)$ as nonlinear functions acting on vectors of dimension $n$.
This is a common issue in nonlinear model reduction not specific to nonlinear balanced truncation.
If the inputs and outputs are linear, then 
$\g_r(\bar{\z}_r) = \J_r(\bar{\z}_r)^{\dagger}\B$ and 
$\h_r(\bar{\z}_r)=\C \Phi_r(\bar{\z}_r)$ where all products of $\C$ and $\T_{k,r}$ can be precomputed.
However, still the ROM in~\eqref{eq:NLROM1}  requires evaluating the full nonlinearity. 
This could be circumvented by a hyper-reduction procedure, e.g., DEIM~\cite{chaturantabut2010nonlinear} and its variants~\cite{DrmG16,DrmS18}.
Only when the, inputs, outputs \textit{and} dynamics are linear, then $\Phi(\bar{\z}_r) = \T_{1,r}\bar{\z}_r$ and $\J_r(\bar{\z}_r) = \T_{1,r}^\dagger$, so the linear ROM becomes
\begin{align}
      \dot{\bar{\z}}_r  = \T_{1,r}^\dagger \A \T_{1,r}\bar{\z}_r +  \T_{1,r}^\dagger \B \u, \quad \y_r = \C \T_{1,r} \bar{\z}_r,
\end{align}
where everything can be precomputed and the ROM can be simulated without any reference to the full dimension~$n$.
\end{remark}

\subsection{Nonlinear balanced manifold ROMs} \label{sec:NLmanifolds}
The described nonlinear balanced truncation approach performs model reduction on an $r$-dimensional polynomially-nonlinear manifold with specific structure. 
Nonlinear balanced truncation first performs a nonlinear balancing transformation $\x = \bar{\Phi}(\z)$, which is followed by truncation.
Thus, $\x \approx \Phi_r(\bar{\z}_r) := \T_{1,r} \bar{\z}_r + \T_{2,r} \kronF{\bar{\z}_r}{2} + \cdots + \T_{k,r} \kronF{\bar{\z}_r}{k}$ as in \eqref{eq:NlStateReduction}, which defines the nonlinear balancing manifold
\begin{equation}
    \mathcal{M}:= \left \{ \Phi_r(\bar{\z}_r)\ : \ \bar{\z}_r \in \Real^r \right \}\subseteq \Real^n.
\end{equation}
Figure~\ref{fig:balancedManifold} illustrates that the system evolves on this balanced nonlinear manifold (the $\bar{z}_i$-coordinates) of the original state space (the $x_i$-coordinates) such that the past and future energy (or controllability/observability energy) of a reduced state can be assessed via the well-known balanced form as outlined in Theorem~\ref{thm:fullybalanced}. This fact then allows for truncation of individual states, as each state has associated energies. In the $\cH_\infty$-balancing case, the retained ROM states are easy to filter and easy control. In the open-loop case, the retained states are easy to reach and easy to observe.
\begin{figure}
    \centering
    \includegraphics[width=0.5\textwidth,trim={0 0.6cm 0.15cm 0},clip]{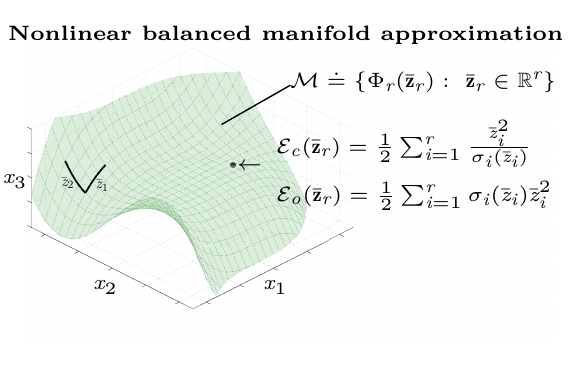}
    \caption{The input-output balanced ROM evolves on a nonlinear manifold, $\mathcal{M}$, such that the controllability/observability (or past/future) energy of a state can be measured in the well-known form that involves the singular value functions $\sigma_i(\bar{z}_i)$.}
    \label{fig:balancedManifold}
\end{figure}

Nonlinear manifold ROMs have become popular recently, as they address the challenges that linear subspace ROMs can face due to slow decay of the Kolmogorov $n$-width, see the survey article~\cite{peherstorfer22breakingKolmogorov}.
For instance, the authors in \cite{carlberg20autoencoderROM,fresca2021comprehensive} use a fully nonlinear autoencoder approach to define the nonlinear manifold ROM $\x \approx \psi(\bar{\z}_r)$, while the  work in~\cite{jain2017quadratic,geelen2022nonlinearManifoldOPINF,barnett2022quadraticManifold} uses quadratic manifolds of the kind $\x \approx \V \bar{\z}_r + \bar{\V}(\bar{\z}_r\otimes \bar{\z}_r)$. While these nonlinear manifold approaches improve the state-approximation error, they do not take into account the system-theoretic (observability/controllability) energies in the reduction, in turn neglecting the full dynamics of the input-to-output map.

\section{Numerical Results: Burgers Equation} \label{sec:numerics}
We present a proof-of-concept example that tests the effectiveness of the algorithms presented in Sections~\ref{sec:BalModels} and \ref{sec:ROMs}.  The finite-dimensional model is generated from finite element discretizations of a PDE.  The PDE of Burgers type is chosen for its quadratic nonlinear terms. This test problem has a long history in the study of control for 
distributed parameter systems, e.g.~\cite{thevenet2009nonlinear}, including the 
development of effective computational methods, e.g.~\cite{burns1990control}, and reduced-order models, e.g.~\cite{kunisch1999cbe}. 
Consider the controlled Burgers equation
\begin{align}
\label{eq:burgers}
  \omega_t(\xi,t) &= \epsilon \omega_{\xi\xi}(\xi,t) - \frac{1}{2}\left(\omega^2(\xi,t)\right)_\xi +  
  \sum_{j=1}^m b_j^m(\xi) u_j(t),\\
  \label{eq:averagingObservation}
  y_i(t) &= \int_{\chi_{[(i-1)/p,i/p]}}\hspace{-2em}\omega(\xi,t) \text{d} \xi, \qquad i=1,\ldots,p, 
\end{align}
with zero Dirichlet boundary conditions. Control inputs are described using the characteristic function $\chi$ as
$b_j^m(\xi) = \chi_{[(j-1)/m,j/m]}(\xi)$ and the outputs are spatial averages of the solution over $p$ equally-spaced subdomains.  We  discretize the state equation with $n+1$ linear finite elements leading to an $n$-dimensional state vector.  The discretized system has the form
\begin{align}\label{eq:discreteCI}
  \widetilde{\bf E}\dot{\boldsymbol \omega} &= \widetilde{\bf A}{\boldsymbol \omega} + \widetilde{\bf N}_2\left( {\boldsymbol \omega}\otimes{\boldsymbol \omega} \right) + \widetilde{\bf B}{\bf u}\\
  {\bf y} &= \widetilde{\bf C}{\boldsymbol \omega},
\end{align}
where ${\boldsymbol \omega}(t)$ are the coefficients of the finite element approximation to $\omega(x,t)$.  To place this in the form~\eqref{eq:FOMNL1}--\eqref{eq:FOMNL2} where the mass matrix is the identity, we introduce
the change of variables ${\bf x} = \widetilde{\bf E}^{1/2}{\boldsymbol \omega}$ where $\widetilde{\bf E}^{1/2}$ is a matrix square root of the finite element mass matrix.  Then defining
$\A=\widetilde{\bf E}^{-1/2}\widetilde{\A}\widetilde{\bf E}^{-1/2}$, $\B=\widetilde{\bf E}^{-1/2}\widetilde{\bf B}$, ${\bf C} = \widetilde{\bf C}\widetilde{\bf E}^{-1/2}$, ${\bf N}_2 = \widetilde{\bf N}_2(\widetilde{\bf E}^{-1/2}\otimes\widetilde{\bf E}^{-1/2})$ leads to a system in the required form.

\subsection{Singular value functions}
In the first experiment, we use the values $\epsilon=0.05$, $n=16$, $m=4$, $p=4$, and $\gamma=3$, and we compute the quartic approximations to the past and future energy functions as described in Part~1 of this paper, \cite{KGB_NonlinearBT_Part1}, to find the coefficients in \eqref{eq:pastenerexp} and \eqref{eq:wi_coeffs}.  Using these coefficients, we compute the cubic transformation tensors $\T_{1}, \T_{2}, \T_{3}$ using Algorithm~\ref{algo:inputnormalTrafo}.  Finally, we compute quadratic approximations to the singular value functions as in Algorithm~\ref{algo:inputnormalSvals}.  The first eight of these are plotted in Figure~\ref{fig:svf}.  
For this problem, we observe that the first four singular value functions are significantly larger in magnitude than the remainder.
Moreover, the ranks of the singular value functions change across the variable range. We observe interesting behavior in the fifth singular value function, with a quadratic approximation that rapidly goes negative in a small region around zero.  Although not plotted here for space, the cases with $p=1$ and $p=2$ outputs are different.  
In the $p=1$ case, the singular value functions do not cross in the same parameter interval; there is an order of magnitude separation of $\xi_1$, $\xi_2$, and $\xi_3$; there are nearly two orders of magnitude between $\xi_5$ and $\xi_6$.  For the case $p=2$, there is a strong separation of the singular value functions near 0, but $\xi_1$ and $\xi_2$ do cross at about $|z|\approx 0.1$. We investigate the quality of ROMs with growing dimension in the next section and correlate the results with the qualitative behavior of the singular value functions. 
\begin{figure}
    \centering
    \includegraphics[width=0.475\textwidth]{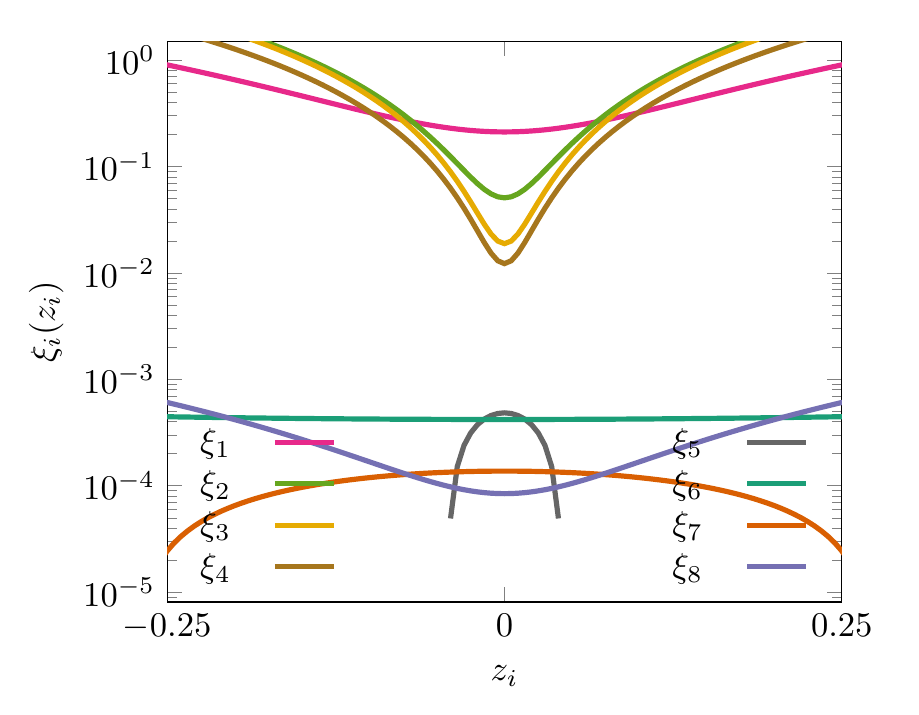}    \caption{\label{fig:svf}Quadratic approximations to the first eight singular value functions for the Burgers example.}
\end{figure}
%

\subsection{Output behavior}
We  simulate the outputs for the system described above. We present results for different ROM-dimensions $r$ and different degree $k$ nonlinear transformations.  We simulate the ROMs from $t=0$ to $t=10$ and report the relative error for each output, $e_i$, defined using
\begin{displaymath}
  e_i = \frac{\left(\int_0^{10} \left| y_i(t)-y_{i,r}(t) \right|^2\ {\rm d}t\right)^{1/2}}{\left(\int_0^{10} \left| y_i(t) \right|^2\ {\rm d}t\right)^{1/2}} , \ i=1,\ldots,p, 
\end{displaymath}
where $y_{i,r}$ is the $i$th output of the ROM of order $r$. For all of our tests, $m>1$ and we use the inputs
\begin{displaymath}
  u_i(t) = \left\{ \begin{array}{cl}
  0.002\tan^{-1}(t) + 0.001\sin(t), & i=1 \\
  0, & \mbox{otherwise}
  \end{array} \right. .
\end{displaymath}

\begin{table}
\caption{\label{tab:relErr}Relative errors ($e_1$) for the $m=4$ and $p=1$ case.}
\centering
\begin{tabular}{cccc}
\hline
$r$ & $k=1$ & $k=3$  & $k=5$ \\
\hline
1  & 0.0714831 & 0.0714814 & 0.0713882 \\
2  & 0.0036861 & 0.0036778 & 0.0031076 \\
3  & 0.0026888 & 0.0026784 & 0.0026665 \\
4  & 0.0024333 & 0.0024288 & 0.0024238 \\
5  & 0.0024095 & 0.0024032 & 0.0023853 \\
\hline
\end{tabular}
\end{table}

In Table~\ref{tab:relErr}, we present the performance of the ROMs for increasing model order and degree of the nonlinear transformation.  As expected, we see the general trend of a decreasing error with increasing the reduced dimension and the degree of the transformation. We do not include in Table~\ref{tab:relErr} the results past $r=5$ as the relative error does not improve further using any degree of the transformations as expected from the singular value function behavior in Figure~\ref{fig:svf}.

In the next experiment, we consider a problem where the linear portion of the model is more significant $\epsilon=0.1$ and we take two model outputs $p=2$ (with the same number of inputs $m=4$).  The results of these experiments are presented in Table~\ref{tab:twoOutputs}.
\begin{table}
\caption{\label{tab:twoOutputs}Relative errors ($e_1$ and $e_2$) for the $m=4$ and $p=2$ case.}
\centering
\begin{tabular}{cc|ccc}
\hline
$r$ & &$k=1$ & $k=3$ & $k=5$  \\
\hline
1 & $e_1$ & 0.361839 & 0.361834 & 0.361626 \\
  & $e_2$ & 0.710212 & 0.710226 & 0.710790 \\ \hline
2 & $e_1$ & 0.043155 & 0.043149 & 0.043112 \\
  & $e_2$ & 0.111431 & 0.111421 & 0.111301 \\ \hline
3 & $e_1$ & 0.004940 & 0.004941 & 0.004861 \\
  & $e_2$ & 0.009017 & 0.009017 & 0.008764 \\ \hline
4 & $e_1$ & 0.003625 & 0.003623 & 0.003628 \\  
  & $e_2$ & 0.020935 & 0.020940 & 0.020898 \\ \hline
5 & $e_1$ & 0.004086 & 0.004087 & 0.004091 \\
  & $e_2$ & 0.018565 & 0.018553 & 0.018534 \\ 
\hline
\end{tabular}
\end{table}
We again see the same trends as the $\epsilon=0.05$ example with improvements in reduced model dimension and transformation degree.  We see the most dramatic improvements with reduced model order with much milder improvements in transformation dimension.  The relative error for each output does not monotonically improve with model order (compare $e_1$ for $r=4$ and $r=5$), but overall, the models are generally better ($e_2$ has more a significant improvement from $r=4$ to $r=5$ in this case).

\section{Conclusions and future directions} \label{sec:conclusions}
We presented a novel and scalable approach for nonlinear balanced truncation of medium to large-scale nonlinear control-affine systems. The approach assumes that system energy functions (e.g., controllability and observability) functions are given in polynomial form. We derive scalable tensor-based formulas for the polynomially-nonlinear state transformation that simultaneously ``diagonalizes'' these relevant energy functions in the new coordinates.  
Since this nonlinear balancing transformation can be ill-conditioned and expensive to evaluate, inspired by the linear case we developed a computationally efficient \textit{balance-and-reduce} strategy. This resulted in further improved scalability and a better conditioned truncated transformation.
We derived closed-form expressions for the resulting ROMs.
We highlight that the work in this paper does not make assumptions on the dynamical systems model form, only on the form of the energy functions (polynomial). Hence the work in this paper applies to general nonlinear systems of control-affine form.

There are several interesting  future research directions to pursue. 
First, the $\mathcal{H}_\infty$ and HJB balancing methods automatically (as a byproduct) give controllers for nonlinear systems, see Part~1 of this paper~\cite[Sec. 2]{KGB_NonlinearBT_Part1} and \cite{sahyoun2013reduced}, \cite{scherpen1996hinfty_balancing}. We plan to evaluate the performance of these controllers on nonlinear systems. 
Second, the evaluation of the nonlinear term and the Jacobian evaluation can be further accelerated by considering empirical interpolation techniques. 
Third, the singular value functions are state-dependent, and cross in state space, necessitating the development of local ROMs. 

\section{Acknowledgements}
We thank Nick Corbin for assisting in the production of Fig.~\ref{fig:balancedManifold} and for valuable comments on drafts of this manuscript. 

\bibliography{NLbal}

\begin{thebibliography}{10}

\bibitem{NLbalancing}
{NL}balancing repository.
\newblock github.com/jborggaard/NLbalancing.

\bibitem{antoulas05}
A.~C. Antoulas.
\newblock {\em Approximation of Large-Scale Dynamical Systems}.
\newblock Advances in Design and Control. Society for Industrial and Applied
  Mathematics, Philadelphia, PA, USA, 2005.

\bibitem{AntBG20}
A.~C. Antoulas, C.~Beattie, and S.~Gugercin.
\newblock {\em Interpolatory methods for model reduction}.
\newblock Computational Science and Engineering 21. SIAM, Philadelphia, 2020.

\bibitem{barnett2022quadraticManifold}
J.~Barnett and C.~Farhat.
\newblock Quadratic approximation manifold for mitigating the kolmogorov
  barrier in nonlinear projection-based model order reduction.
\newblock {\em Journal of Computational Physics}, page 111348, 2022.

\bibitem{benner2017chapter}
P.~Benner and T.~Breiten.
\newblock {\em Chapter 6: Model Order Reduction Based on System Balancing},
  pages 261--295.
\newblock SIAM, 2017.

\bibitem{BCOW2017morBook}
P.~Benner, A.~Cohen, M.~Ohlberger, and K.~Willcox, editors.
\newblock {\em Model Reduction and Approximation: Theory and Algorithms}.
\newblock Computational Science \& Engineering. SIAM Publications,
  Philadelphia, PA, 2017.

\bibitem{bennerGoyal2017BT_quadBilinear}
P.~Benner and P.~Goyal.
\newblock Balanced truncation model order reduction for quadratic-bilinear
  control systems.
\newblock {\em arXiv:1705.00160}, 2017.

\bibitem{BGW15surveyMOR}
P.~Benner, S.~Gugercin, and K.~Willcox.
\newblock A survey of projection-based model reduction methods for parametric
  dynamical systems.
\newblock {\em SIAM Review}, 57(4):483--531, 2015.

\bibitem{benner2013numerical}
P.~Benner and J.~Saak.
\newblock Numerical solution of large and sparse continuous time algebraic
  matrix {R}iccati and {L}yapunov equations: a state of the art survey.
\newblock {\em GAMM-Mitteilungen}, 36(1):32--52, 2013.

\bibitem{besselinkScherpen2014incrementalBT}
B.~Besselink, N.~van~de Wouw, J.~M. Scherpen, and H.~Nijmeijer.
\newblock Model reduction for nonlinear systems by incremental balanced
  truncation.
\newblock {\em IEEE Transactions on Automatic Control}, 59(10):2739--2753,
  2014.

\bibitem{bouvrie2017kernel}
J.~Bouvrie and B.~Hamzi.
\newblock Kernel methods for the approximation of nonlinear systems.
\newblock {\em SIAM Journal on Control and Optimization}, 55(4):2460--2492,
  2017.

\bibitem{burns1990control}
J.~A. Burns and S.~Kang.
\newblock A control problem for {B}urgers equation with bounded input/output.
\newblock Technical report, ICASE, 1990.

\bibitem{chaturantabut2010nonlinear}
S.~Chaturantabut and D.~C. Sorensen.
\newblock Nonlinear model reduction via discrete empirical interpolation.
\newblock {\em SIAM Journal on Scientific Computing}, 32(5):2737--2764, 2010.

\bibitem{condon2005nonlinear}
M.~Condon and R.~Ivanov.
\newblock Nonlinear systems--algebraic {G}ramians and model reduction.
\newblock {\em COMPEL-The international journal for computation and mathematics
  in electrical and electronic engineering}, 24(1):202--219, 2005.

\bibitem{desai1984stochasticBal}
U.~Desai and D.~Pal.
\newblock A transformation approach to stochastic model reduction.
\newblock {\em IEEE Transactions on Automatic Control}, 29(12):1097--1100,
  1984.

\bibitem{DrmG16}
Z.~Drma{c} and S.~Gugercin.
\newblock A new selection operator for the {Discrete} {Empirical}
  {Interpolation Method} -- improved a priori error bound and extensions.
\newblock {\em SIAM Journal on Scientific Computing}, 38(2):A631--A648, 2016.

\bibitem{DrmS18}
Z.~Drma{c} and A.~Saibaba.
\newblock The discrete empirical interpolation method: Canonical structure and
  formulation in weighted inner product spaces.
\newblock {\em SIAM Journal on Matrix Analysis and Applications},
  39(3):1152--1180, 2018.

\bibitem{enns1984model}
D.~F. Enns.
\newblock Model reduction with balanced realizations: An error bound and a
  frequency weighted generalization.
\newblock In {\em The 23rd IEEE conference on decision and control}, pages
  127--132. IEEE, 1984.

\bibitem{fresca2021comprehensive}
S.~Fresca, L.~Dede, and A.~Manzoni.
\newblock A comprehensive deep learning-based approach to reduced order
  modeling of nonlinear time-dependent parametrized {PDE}s.
\newblock {\em Journal of Scientific Computing}, 87(2):1--36, 2021.

\bibitem{fujimoto2010balanced}
K.~Fujimoto and J.~M. Scherpen.
\newblock Balanced realization and model order reduction for nonlinear systems
  based on singular value analysis.
\newblock {\em SIAM Journal on Control and Optimization}, 48(7):4591--4623,
  2010.

\bibitem{fujimoto2008TaylorSeriesBT}
K.~Fujimoto and D.~Tsubakino.
\newblock Computation of nonlinear balanced realization and model reduction
  based on {T}aylor series expansion.
\newblock {\em Systems \& Control Letters}, 57(4):283--289, 2008.

\bibitem{geelen2022nonlinearManifoldOPINF}
R.~Geelen, S.~Wright, and K.~Willcox.
\newblock Operator inference for non-intrusive model reduction with quadratic
  manifolds.
\newblock {\em Computer Methods in Applied Mechanics and Engineering},
  403:115717, 2023.

\bibitem{gray2001nonuniqueness}
W.~S. Gray and J.~M. Scherpen.
\newblock On the nonuniqueness of singular value functions and balanced
  nonlinear realizations.
\newblock {\em Systems \& Control Letters}, 44(3):219--232, 2001.

\bibitem{verriest2006algebraicGramiansNLBT}
W.~S. Gray and E.~I. Verriest.
\newblock Algebraically defined {G}ramians for nonlinear systems.
\newblock In {\em Decision and Control, 2006 45th IEEE Conference on}, pages
  3730--3735. IEEE, 2006.

\bibitem{green1988balancedstochastic}
M.~Green.
\newblock Balanced stochastic realizations.
\newblock {\em Linear Algebra and its Applications}, 98:211--247, 1988.

\bibitem{gugercin2004survey}
S.~Gugercin and A.~C. Antoulas.
\newblock A survey of model reduction by balanced truncation and some new
  results.
\newblock {\em International Journal of Control}, 77(8):748--766, 2004.

\bibitem{jain2017quadratic}
S.~Jain, P.~Tiso, J.~B. Rutzmoser, and D.~J. Rixen.
\newblock A quadratic manifold for model order reduction of nonlinear
  structural dynamics.
\newblock {\em Computers \& Structures}, 188:80--94, 2017.

\bibitem{jonckheere1983LQGbalancing}
E.~Jonckheere and L.~Silverman.
\newblock A new set of invariants for linear systems--application to reduced
  order compensator design.
\newblock {\em IEEE Transactions on Automatic Control}, 28(10):953--964, 1983.

\bibitem{kawano2016model}
Y.~Kawano and J.~M. Scherpen.
\newblock Model reduction by differential balancing based on nonlinear hankel
  operators.
\newblock {\em IEEE Transactions on Automatic Control}, 62(7):3293--3308, 2016.

\bibitem{KGB_NonlinearBT_Part1}
B.~Kramer, S.~Gugercin, J.~Borggaard, and L.~Balicki.
\newblock Nonlinear balanced truncation: Part 1--computing energy functions.
\newblock {\em arXiv:2209.07645}, 2022.

\bibitem{krener2008reduced}
A.~J. Krener.
\newblock Reduced order modeling of nonlinear control systems.
\newblock In {\em Analysis and Design of Nonlinear Control Systems}, pages
  41--62. Springer, 2008.

\bibitem{kunisch1999cbe}
K.~Kunisch and S.~Volkwein.
\newblock Control of burgers equation by a reduced order approach using proper
  orthogonal decomposition.
\newblock {\em Journal on Optimization Theory and Applications}, 102:345--71,
  1999.

\bibitem{lall2002subspace}
S.~Lall, J.~E. Marsden, and S.~Glavaski.
\newblock A subspace approach to balanced truncation for model reduction of
  nonlinear control systems.
\newblock {\em International Journal of Robust and Nonlinear Control:
  IFAC-Affiliated Journal}, 12(6):519--535, 2002.

\bibitem{carlberg20autoencoderROM}
K.~Lee and K.~T. Carlberg.
\newblock Model reduction of dynamical systems on nonlinear manifolds using
  deep convolutional autoencoders.
\newblock {\em Journal of Computational Physics}, 404:108973, 2020.

\bibitem{li2013solving}
T.~Li, E.~K.-w. Chu, W.-W. Lin, and P.~C.-Y. Weng.
\newblock Solving large-scale continuous-time algebraic {R}iccati equations by
  doubling.
\newblock {\em Journal of Computational and Applied Mathematics},
  237(1):373--383, 2013.

\bibitem{lukes1969optimal}
D.~L. Lukes.
\newblock Optimal regulation of nonlinear dynamical systems.
\newblock {\em SIAM Journal on Control}, 7(1):75--100, 1969.

\bibitem{moore81principal}
B.~Moore.
\newblock Principal component analysis in linear systems: Controllability,
  observability, and model reduction.
\newblock {\em IEEE Transactions on Automatic Control}, 26(1):17--32, 1981.

\bibitem{mullis1976synthesis}
C.~Mullis and R.~Roberts.
\newblock Synthesis of minimum roundoff noise fixed point digital filters.
\newblock {\em IEEE Transactions on Circuits and Systems}, 23(9):551--562,
  1976.

\bibitem{glover91Hinftybalancing}
D.~{Mustafa} and K.~{Glover}.
\newblock Controller reduction by {H}$_{\infty}$-balanced truncation.
\newblock {\em IEEE Transactions on Automatic Control}, 36(6):668--682, 1991.

\bibitem{ober1991balanced}
R.~Ober.
\newblock Balanced parametrization of classes of linear systems.
\newblock {\em SIAM Journal on Control and Optimization}, 29(6):1251--1287,
  1991.

\bibitem{opdenacker1988contraction}
P.~C. Opdenacker and E.~A. Jonckheere.
\newblock A contraction mapping preserving balanced reduction scheme and its
  infinity norm error bounds.
\newblock {\em IEEE Transactions on Circuits and Systems}, 35(2):184--189,
  1988.

\bibitem{peherstorfer22breakingKolmogorov}
B.~Peherstorfer.
\newblock Breaking the {K}olmogorov barrier with nonlinear model reduction.
\newblock {\em Notices of the American Mathematical Society}, 69(5), 2022.

\bibitem{sahyoun2013reduced}
S.~Sahyoun, J.~Dong, and S.~M. Djouadi.
\newblock Reduced order modeling for fluid flows based on nonlinear balanced
  truncation.
\newblock In {\em 2013 American Control Conference}, pages 1284--1289. IEEE,
  2013.

\bibitem{sassano2014dynamic}
M.~Sassano and A.~Astolfi.
\newblock Dynamic generalized controllability and observability functions with
  applications to model reduction and sensor deployment.
\newblock {\em Automatica}, 50(5):1349--1359, 2014.

\bibitem{scherpen1993balancing}
J.~M. Scherpen.
\newblock Balancing for nonlinear systems.
\newblock {\em Systems \& Control Letters}, 21(2):143--153, 1993.

\bibitem{scherpen1996hinfty_balancing}
J.~M. Scherpen.
\newblock $\mathcal{H}_\infty$ balancing for nonlinear systems.
\newblock {\em International Journal of Robust and Nonlinear Control},
  6(7):645--668, 1996.

\bibitem{scherpen1994normCoprime}
J.~M.~A. Scherpen and A.~Van~der Schaft.
\newblock Normalized coprime factorizations and balancing for unstable
  nonlinear systems.
\newblock {\em International Journal of Control}, 60(6):1193--1222, 1994.

\bibitem{SvdVR08}
W.~H. Schilders, H.~A. van~der Vorst, and J.~Rommes, editors.
\newblock {\em Model Order Reduction: Theory, Research Aspects and
  Applications}, Berlin, 2008. Springer.

\bibitem{simoncini2016computational}
V.~Simoncini.
\newblock Computational methods for linear matrix equations.
\newblock {\em SIAM Review}, 58(3):377--441, 2016.

\bibitem{thevenet2009nonlinear}
L.~Thevenet, J.-M. Buchot, and J.-P. Raymond.
\newblock Nonlinear feedback stabilization of a two-dimensional {B}urgers
  equation.
\newblock {\em ESAIM: Control, Optimisation and Calculus of Variations},
  16(4):929--955, 2009.

\bibitem{verriest1981suboptimal}
E.~I. Verriest.
\newblock Suboptimal {LQG}-design via balanced realizations.
\newblock In {\em 1981 20th IEEE Conference on Decision and Control including
  the Symposium on Adaptive Processes}, pages 686--687. IEEE, 1981.

\bibitem{verriest2000flow}
E.~I. Verriest and W.~S. Gray.
\newblock Flow balancing nonlinear systems.
\newblock In {\em Proc. 2000 Int. Symp. Math. Th. Netw. Syst}, 2000.

\bibitem{Zho96}
K.~Zhou, J.~Doyle, and K.~Glover.
\newblock {\em Robust and Optimal Control}.
\newblock Prentice Hall, Upper Saddle River, NJ, 1996.

\end{thebibliography}
\bibliographystyle{abbrv} 

\end{document}